\newtheorem{theorem}{Theorem}%[section]
\newtheorem{definition}[theorem]{Definition}
\newtheorem{question}{Question}
\newtheorem{example}{Example}
\newtheorem{lemma}[theorem]{Lemma}
\newtheorem{proposition}[theorem]{Proposition}
\newenvironment{proof}{\noindent {\bf Proof.}}{\hfill\rule{3mm}{3mm}\par\medskip}
\begin{document}
\title{Perfect Pseudo-Matchings in cubic graphs\footnote{
Research supported  by FWF Project P27615-N25.
}}
\author{{\sc Herbert Fleischner${}^{a}$}, {\sc Behrooz Bagheri Gh.${}^{a,b}$},\\ and {\sc Benedikt  Klocker${}^{a}$}}
\date{}
  \maketitle
  \vspace{-1cm}
\begin{center}
$a$
{\small \it Algorithms and Complexity Group}\\
{\small \it Vienna University of Technology}\\
{\small  \it Favoritenstrasse 9-11,}
\vspace*{5mm}
{\small \it 1040 Vienna, Austria } \\
$b$
{\small \it Department of Mathematics} \\
{\small \it West Virginia University} \\
{\small \it WV 26506-6310,}
\vspace*{5mm}
{\small \it Morgantown, USA} \
\end{center}

%\date{}

%\maketitle

\begin{abstract}
A {\sf perfect pseudo-matching} $M$ in a cubic graph $G$ is a spanning subgraph of $G$ such that every component of $M$ is isomorphic to $K_2$ or to $K_{1,3}$. In view of snarks $G$ with dominating cycle $C$, this is a natural generalization of perfect matchings since $G \setminus E(C)$ is a perfect pseudo-matching. Of special interest are such $M$ where $G/M$ is planar because such $G$ have a cycle double cover. We show that various well known classes of snarks contain planarizing perfect pseudo-matchings, and that there are at least as many snarks with planarizing perfect pseudo-matchings as there are cyclically $5-$edge-connected snarks.

\vspace{1cm}
{\bf Keywords:}
Snark; Perfect Pseudo-Matching; Eulerian graph; Transition system; Compatible cycle decomposition;  Cycle double cover.

\end{abstract}

\section{Introduction and preliminaries}

All concepts not defined in this paper can be found in~\cite{Bondy2008,FleischnerBook1, Fleischner_badK5}, giving preference to a definition as stated in~\cite{FleischnerBook1} if it differs from the corresponding definition in~\cite{Bondy2008}.    
A cycle $C$ in a graph $G$ is {\sf dominating} if $E(G\setminus V(C))=\emptyset$. 
A cycle $C$ in a graph $G$ is called {\sf stable} if there exists no other cycle $D$ in $G$ such that $V(C)\subseteq V(D)$.

Our point of departure is the following.

\medskip \noindent
{\bf Sabidussi's Compatibility Conjecture (SC Conjecture)} {\it Given a connected eulerian graph G with $\delta(G) > 2$ and an eulerian trail $T_\epsilon$ of $G$, there is a cycle decomposition $\mathcal{S}$ of $G$ such that edges consecutive in $T_\epsilon$ belong to different elements in $\mathcal{S}$. Whence one calls $\mathcal{S}$ {\sf compatible} with $T_\epsilon$.}\\

The converse of the SC Conjecture is easily proved (see~\cite{Kotzig} but also~\cite[Theorem~VI.1]{FleischnerBook1}): Given a cycle decomposition  $\mathcal{S}$ in the above $G$, then $G$ has an eulerian trail $T_\epsilon$ such that $\mathcal{S}$ and $T_\epsilon$ are compatible. As has been noted before it suffices to consider such eulerian graphs $G$ for which $4 \le \delta(G) \le \Delta(G) \le 6$,~\cite[Lemma~1]{Fleischner1984}. In turn, such $G$ can be transformed into a cubic graph by splitting away from each vertex the transitions of a given eulerian trail $T_\epsilon$ of $G$, thus transforming $T_\epsilon$ into a cycle $C_\epsilon$. This transformation of $T_\epsilon$ into $C_\epsilon$ can also be viewed as a detachment of $G$ (see, e.g.,~\cite[Corollaries~V.10~and~V.13]{FleischnerBook1}). Now, for a  vertex $v$ of degree $4$ in $G$ and the corresponding vertices $v^{'}$ and $v^{''}$ in $C_\epsilon$ add an edge $v^{'}v^{''}$, whereas for a  vertex $w$ of degree $6$ in $G$ and the corresponding $w^{'}, w^{''}, w^{'''}$ in $C_\epsilon$ introduce a new vertex $w^{*}$  and the edges $w^{*}w^{'}, w^{*}w^{''}, w^{*}w^{'''}$. The resulting cubic graph $G_3$ contains $C_\epsilon$ as a dominating cycle. Call $(G_3, C_\epsilon)$ {\sf associated} with $(G, T_\epsilon)$. It is intuitively clear how one obtains $(G, T_\epsilon)$ from $(G_3, C_\epsilon)$. The following is well known~\cite[pp.~236--237]{Fleischner1984}.

\begin{proposition}\label{proposition1}
Let $(G, T_\epsilon)$ and $(G_3, C_\epsilon)$ be as above. $G$ has a cycle decomposition compatible with $T_\epsilon$ if and only if $G_3$ has a cycle double cover $\mathcal{S}$ with $C_\epsilon \in \mathcal{S}$.
\end{proposition}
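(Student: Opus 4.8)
The plan is to set up an explicit correspondence between cycle decompositions of $G$ compatible with $T_\epsilon$ and cycle double covers of $G_3$ containing $C_\epsilon$, mediated by the edge set $M := E(G_3)\setminus E(C_\epsilon)$. By construction $M$ is a perfect pseudo-matching of $G_3$: its $K_2$-components are the added edges $v'v''$ at the degree-$4$ vertices $v$ of $G$, and its $K_{1,3}$-components are the stars $\{w^{*}w',w^{*}w'',w^{*}w'''\}$ at the degree-$6$ vertices $w$ of $G$. Contracting each component of $M$ recovers $(G,T_\epsilon)$; in particular $G_3/M\cong G$, the contraction induces a bijection $E(G_3)\setminus M\to E(G)$, and the two transitions of $T_\epsilon$ at a degree-$4$ vertex $v$ are exactly the transitions of $C_\epsilon$ at the copies $v',v''$, while the three transitions of $T_\epsilon$ at a degree-$6$ vertex $w$ are exactly the transitions of $C_\epsilon$ at $w',w'',w'''$.

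For the ``only if'' direction, start from a cycle decomposition $\mathcal S$ of $G$ compatible with $T_\epsilon$ and lift each $S\in\mathcal S$ to $G_3$: wherever $S$ passes through a degree-$4$ vertex $v$, compatibility forces $S$ to use one edge incident with $v'$ and one incident with $v''$, so reroute $S$ across the edge $v'v''$; wherever $S$ passes through a degree-$6$ vertex $w$, compatibility forces $S$ to use edges at two distinct copies among $w',w'',w'''$, so reroute $S$ through the corresponding two edges of the star at $w^{*}$. Since $S$ is a genuine cycle it meets each such $v$ or $w$ at most once, so the lift $\hat S$ is again a genuine cycle of $G_3$, and $\hat S$ traverses an edge of $C_\epsilon$ iff $S$ traverses the corresponding edge of $G$. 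Put $\widehat{\mathcal S}:=\{\hat S:S\in\mathcal S\}\cup\{C_\epsilon\}$ and count multiplicities: an edge of $C_\epsilon$ is covered once by $C_\epsilon$ and once by the unique lift through it; an added edge $v'v''$ (resp.\ $w^{*}w'$) is covered exactly by the lifts of those elements of $\mathcal S$ that use exactly one of the two $T_\epsilon$-transition edges at $v'$ (resp.\ $w'$), and there are exactly two such elements because each of those two edges lies in exactly one element of $\mathcal S$ and these two elements are distinct (otherwise that element would use a $T_\epsilon$-transition). Hence $\widehat{\mathcal S}$ is a cycle double cover of $G_3$ with $C_\epsilon\in\widehat{\mathcal S}$.

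For the ``if'' direction, take a cycle double cover $\mathcal S$ of $G_3$ with $C_\epsilon\in\mathcal S$ and set $\mathcal S':=\mathcal S\setminus\{C_\epsilon\}$; then $\mathcal S'$ covers each edge of $C_\epsilon$ once and each edge of $M$ twice (the case where $C_\epsilon$ occurs twice in $\mathcal S$ is impossible, since then no member of $\mathcal S$ could cover the added edges). The local count at a copy $v'$ of a degree-$4$ vertex, which is incident with two $C_\epsilon$-edges and with $v'v''$, forces the two passages of $\mathcal S'$ through $v'$ to be of the form $\{e_1,v'v''\}$ and $\{e_2,v'v''\}$; likewise at $w'$ the two passages are $\{e_1,w'w^{*}\}$ and $\{e_2,w'w^{*}\}$, and at $w^{*}$ the three passages are the three distinct pairs of star-edges. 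In particular no member of $\mathcal S'$ uses the two $C_\epsilon$-edges at a single copy, so after contracting $M$ no member of $\mathcal S'$ uses a $T_\epsilon$-transition. Contracting $M$ therefore turns each $\hat S\in\mathcal S'$ into a genuine cycle $S$ of $G=G_3/M$ avoiding all transitions of $T_\epsilon$, and since contraction is a bijection on the relevant edge sets, $\{S:\hat S\in\mathcal S'\}$ is a cycle decomposition of $G$ compatible with $T_\epsilon$.

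The bulk of the work is the vertex-by-vertex bookkeeping, and the one point that deserves care is the behaviour at degree-$6$ vertices: one must show that the three cycles running through $w^{*}$ use the three distinct pairs of star-edges. In the ``only if'' direction this is exactly where compatibility is used (if two lifts used the same pair of star-edges, the two edges of the third $T_\epsilon$-transition at $w$ would end up in a single element of $\mathcal S$, contradicting compatibility), and in the ``if'' direction it is forced by the covering count. Beyond that, the lift and the $M$-contraction are visibly mutually inverse up to the harmless freedom in how a cycle is rerouted at $w^{*}$, which establishes the equivalence.
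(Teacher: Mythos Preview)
The paper does not actually supply its own proof of Proposition~\ref{proposition1}; it merely states the result as ``well known'' and cites \cite[pp.~236--237]{Fleischner1984}. So there is no in-paper argument to compare against line by line.

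That said, your proof is correct and is precisely the argument one expects---indeed, it is the very mechanism the paper itself exploits in the paragraphs following the proposition. Your identification $M:=E(G_3)\setminus E(C_\epsilon)$ as a perfect pseudo-matching with $G_3/M\cong G$ is exactly the paper's framework (compare the paragraph beginning ``Note that $G\setminus E(M)$ is a set $\mathcal{C}_0$ of disjoint cycles\ldots''). The local bookkeeping you do at copies $v',v''$ of a degree-$4$ vertex and at $w',w'',w''',w^{*}$ for a degree-$6$ vertex is the standard way to pass back and forth, and your covering counts are right: at $w^{*}$ the three passages of $\mathcal S'$ are forced to be the three distinct $2$-subsets of the star-edges because any repetition of a pair would leave the third star-edge covered zero times. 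Your handling of the degenerate case $C_\epsilon\in\mathcal S$ twice is also fine, since any circuit through an $M$-edge must use an adjacent $C_\epsilon$-edge at the cubic endvertex.

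One small remark: your closing comment about ``harmless freedom in how a cycle is rerouted at $w^{*}$'' is unnecessary. Once $S$ is fixed, the lift $\hat S$ is uniquely determined (the two star-edges used at $w^{*}$ are dictated by which two transitions $S$ meets at $w$), so the lift and the $M$-contraction are genuine mutual inverses between compatible cycle decompositions of $(G,T_\epsilon)$ and CDCs of $G_3$ containing $C_\epsilon$. This only strengthens what you wrote.
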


 \medskip \noindent
{\bf Cycle Double Cover Conjecture (CDC Conjecture)} {\it In every bridgeless graph $G$ with $E(G)\neq  \emptyset$ there is a collection $\mathcal{S}$ of cycles such that every edge of $G$ belongs to exactly two elements of $\mathcal{S}$.}
\\

In dealing with the CDC Conjecture, it suffices to consider snarks, i.e., cyclically $4-$edge-connected cubic graphs which are not $3-$edge-colorable.
In our understanding of snarks we omit the usual requirement that the girth must exceed $4$.
 For snarks, the following conjecture has been formulated which is equivalent to various other conjectures, \cite{Zdenek2008,Fleischner_Bill,Zdenek1997}.\\

 \medskip \noindent
{\bf Dominating Cycle Conjecture (DC Conjecture)} {\it Every snark has a dominating cycle.}\\

 Proposition~\ref{proposition1} and the DC Conjecture demonstrate the close relationship between SC Conjecture and CDC Conjecture.
 
\begin{definition}
 Let $G$ be  a $2-$connected
 eulerian graph.
 For  each  vertex $v\in V(G)$,  let
$\mathcal{T}(v)$  be a set
 of disjoint edge-pairs of $E(v)$, and,
  $\mathcal{T}
 = \bigcup_{v \in V(G)} \mathcal{T}(v)$. Then,  $\mathcal{T}$ is called a {\sf transition system}, and a
  cycle decomposition $\mathcal{C}$ of $G$ is
{\sf compatible} with $\mathcal{T}$
 if $|E(C)\cap P|\le 1$ for every member
 $C\in \mathcal{C}$ and every
 $P \in \mathcal{T}$.
\end{definition} 
 
 For planar eulerian graphs $G$ one can prove a more general result than the SC Conjecture; namely it suffices to assume that the transition system is non-separating; one even has a compatible cycle decomposition (CCD) in  an arbitrary eulerian graph $G$  if the given transition system in $G$
 is non-separating and does not yield a {\rm SUD}-$K_5-$minor{\footnote{See the definition of a {\rm SUD}-$K_5-$minor in a transitioned graph $(G,\mathcal{T})$ in~\cite{Fleischner_badK5}.}}. This has been shown recently in~\cite{Fleischner_badK5}.

 We note explicitly that the compatible cycle decomposition problem has been verified
for planar graphs
  by
Fleischner~\cite{Fleischner1980},
 for $K_5-$minor-free graphs by
Fan and Zhang~\cite{Fan2000ccd}, 
and for {\rm SUD}-$K_5-$minor-free graphs by
Fleischner  {\it et al.}~\cite{Fleischner_badK5}.

On the other hand, when focusing on the SC Conjecture with all vertices of degree $4$ and $6$ only in a given graph $G$ with eulerian trail $T_\epsilon$, one has a direct connection to cubic graphs with a dominating cycle  $C_\epsilon$ where $C_\epsilon$ corresponds to $T_\epsilon$ and viceversa, as noted above (see Proposition~\ref{proposition1}).
This leads us to the following definition. 

\begin{definition}
A {\sf perfect pseudo-matching} $M$ in a graph $G$ is a collection of vertex-disjoint subgraphs of $G$ such that
\begin{itemize}
\item[$\bullet$] every member of $M$ is isomorphic to either $K_2$ or $K_{1,3}$, and
\item[$\bullet$] $\bigcup_{H\in M}V(H)=V(G)$.
\end{itemize}
\end{definition}
\begin{definition} Let $M$ be a perfect matching $($perfect pseudo-matching$)$ in a graph $G$.
 $M$  is called  
 \begin{itemize}
 \item[$\bullet$]{\sf planarizing} if $G/M$ is a planar graph;
 \item[$\bullet$]{\sf $K_5-$minor-free} if $G/M$ has no $K_5-$minor;
 \item[$\bullet$]{\sf $\rm SUD$-$K_5-$minor-free} if $(G/M,\mathcal{T}_M)$ has no $\rm SUD$-$K_5-$minor, where
$\mathcal{T}_M$ is the transition system induced by contracting $M$.
 \end{itemize}
\end{definition}

Note that every perfect matching is a perfect pseudo-matching; and 
for every dominating cycle $C$ in a cubic graph $G$, $M:=G\setminus E(C)$  is also a perfect pseudo-matching.

 Note that $G \setminus E(M)$ is a set $\mathcal{C}_0$ of disjoint cycles in $G$ for any perfect pseudo-matching $M$ of a cubic graph $G$. We conclude that if $G/M$ has a compatible cycle decomposition (with the transitions defined by the pairs of adjacent edges in $\mathcal{C}_0$ and thus yielding a transition system $\mathcal{T}_M$), then $G$ has a cycle double cover $\mathcal{C}$ with $\mathcal{C}_0\subset \mathcal{C}$. 
 
 \begin{example} Consider the perfect pseudo-matching $M$, visualized as bold-face edges  in the Petersen graph $P$ in  Figure~$\ref{FIG:Petersen}$.
 $P\setminus E(M)$ yields the dominating cycle \\ $C_0=v_1v_2v_3v_4v_9v_7v_5v_8v_6v_1$. Then
 $$\mathcal{C}=\{C_0,v_0v_1v_2v_7v_5v_0,v_0v_1v_6v_9v_4v_0,v_0v_4v_3v_8v_5v_0,v_2v_3v_8v_6v_9v_7v_2\}$$
 is a cycle double cover 
 of the Petersen graph  containing $C_0$ obtained from a compatible cycle decomposition of $(P/M,\mathcal{T}_M)$.
 This {\rm CDC} $\mathcal{C}$ exists because $P/M$ is planar; i.e., 
 $M$ is a  planarizing perfect pseudo-matching.
  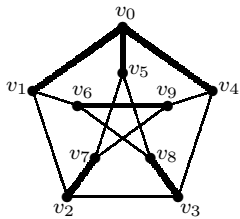
\begin{figure}[ht]

\setlength{\unitlength}{.125cm}
\vspace{1cm}
\begin{center}

%\begin{floatrow} 

\begin{picture}(8,14)
%\put(2,5){$B_1^{1}$}
\put(0,0){\circle*{1.2}}
\put(-1.5,-1.75){\scriptsize$v_2$}
\put(11.8,0){\circle*{1.2}}
\put(12.,-1.75){\scriptsize$v_3$}
\put(15.46,11.21){\circle*{1.2}}
\put(16.1,11.2){\scriptsize$v_4$}
\put(5.9,18.1){\circle*{1.2}}
\put(5.1,19.2){\scriptsize$v_0$}
\put(-3.66,11.21){\circle*{1.2}}
\put(-6.5,11.2){\scriptsize$v_1$}

\put(2.95,4.05){\circle*{1.2}}
\put(.2,4){\scriptsize$v_7$}
\put(8.85,4.05){\circle*{1.2}}
\put(9.5,4){\scriptsize$v_8$}

\put(10.7,9.7){\circle*{1.2}}
\put(9.5,10.8){\scriptsize$v_9$}

\put(5.9,13.1){\circle*{1.2}}
\put(6.5,13.){\scriptsize$v_5$}

\put(1.1,9.7){\circle*{1.2}}
\put(.5,10.8){\scriptsize$v_6$}

\thicklines
\linethickness{0.15mm}

%\thicklines
%\linethickness{0.20mm}
\qbezier(0,0)(0,0)(11.8,0)
\qbezier(0,0)(0,0)(-3.66,11.21)
\qbezier(1.1,9.7)(1.1,9.7)(-3.66,11.21)
\qbezier(10.7,9.7)(10.7,9.7)(15.46,11.21)
\qbezier(1.1,9.7)(1.1,9.7)(8.85,4.05)
\qbezier(10.7,9.7)(10.7,9.7)(2.95,4.05)
\qbezier(5.9,13.1)(5.9,13.1)(2.95,4.05)
\qbezier(5.9,13.1)(5.9,13.1)(8.85,4.05)
\qbezier(11.8,0)(11.8,0)(15.46,11.21)

%\qbezier(-3.66,11.21)(-3.66,11.21)(5.9,8.1)

\thicklines
\linethickness{0.6mm}
\qbezier(-3.66,11.21)(5.9,18.)(5.9,18.)
\qbezier(5.9,13.1)(5.9,18.)(5.9,18.)
\qbezier(15.46,11.21)(5.9,18.)(5.9,18.)

\qbezier(1.1,9.7)(1.1,9.7)(10.7,9.7)

\qbezier(0,0)(0,0)(2.95,4.05)
\qbezier(11.8,0)(11.8,0)(8.85,4.05)

\end{picture}

%\end{floatrow}

\end{center}
\caption{\small\it 
The  Petersen graph and its planarizing perfect pseudo-matching visualized as bold-face edges.}
\label{FIG:Petersen}
\end{figure}
 \end{example}

\section{Planarizing perfect pseudo-matchings in snarks}

In view of Fleischner's result on compatible cycle decomposition in planar eulerian $G$ with $\delta(G)\ge 4$ and with given non-separating  transition system $\mathcal{T}$,~\cite{Fleischner1980}, it is of particular interest to search for planarizing perfect pseudo-matchings $M$ in snarks. We study various well-known types of graphs in this direction.

By a computer search we found that  all snarks with up to $26$ vertices  contain a planarizing perfect pseudo-matching except two snarks of order $26$;  one of them is given in  Figure~\ref{FIG:Snark26}.

\begin{description}
  
   \item[(a)] {\bf Blanu\v{s}a snarks}

 \begin{figure}[ht]

\setlength{\unitlength}{0.125cm}
\vspace{1cm}
\begin{center}

%\begin{floatrow} 

\begin{picture}(102,14)
\put(5,17){$B_0$}
\put(10,2){\circle*{1.2}}
\put(7.75,0.5){\tiny$u_0$}
\put(10,7){\circle*{1.2}}
\put(7.,7){\tiny$u_3$}
\put(10,12){\circle*{1.2}}
\put(7.75,13){\tiny$u_5$}
\put(15,2){\circle*{1.2}}
\put(14,0.5){\tiny$u_1$}
\put(15,12){\circle*{1.2}}
\put(12.75,13){\tiny$u_6$}
\put(20,2){\circle*{1.2}}
\put(20,0.5){\tiny$u_2$}
\put(20,7){\circle*{1.2}}
\put(20.75,7){\tiny$u_4$}
\put(20,12){\circle*{1.2}}
\put(20,13){\tiny$u_7$}

\put(4,2){\footnotesize$a$}
\put(4,12){\footnotesize$b$}
\put(24.5,2){\footnotesize$b^{'}$}
\put(24.5,12){\footnotesize$a^{'}$}

%\thicklines
%\linethickness{0.20mm}
\qbezier(6,2)(10,2)(24,2)
\qbezier(10,7)(10,7)(20,7)
\qbezier(6,12)(10,12)(24,12)
\qbezier(10,2)(10,7)(10,12)
\qbezier(15,2)(15,7)(15,12)
\qbezier(20,2)(20,7)(20,12)

\put(40,17){$B_1$}
\put(45,2){\circle*{1.2}}
\put(42.75,0.5){\tiny$v_0$}
\put(45,7){\circle*{1.2}}
\put(42.,7){\tiny$v_3$}
\put(45,12){\circle*{1.2}}
\put(42.75,13){\tiny$v_5$}
\put(50,2){\circle*{1.2}}
\put(49,0.5){\tiny$v_1$}
\put(50,12){\circle*{1.2}}
\put(47.75,13){\tiny$v_6$}
\put(55,2){\circle*{1.2}}
\put(54,0.5){\tiny$v_2$}
\put(55,7){\circle*{1.2}}
\put(55.75,7){\tiny$v_4$}
\put(55,12){\circle*{1.2}}
\put(54,13){\tiny$v_7$}
\put(60,2){\circle*{1.2}}
\put(60,.5){\tiny$v_8$}
\put(60,12){\circle*{1.2}}
\put(60,13){\tiny$v_9$}

\put(39,2){\footnotesize$a$}
\put(39,12){\footnotesize$b$}
\put(64.5,2){\footnotesize$b^{'}$}
\put(64.5,12){\footnotesize$a^{'}$}

\qbezier(41,2)(45,2)(64,2)
\qbezier(45,7)(45,7)(55,7)
\qbezier(41,12)(45,12)(64,12)
\qbezier(45,2)(45,7)(45,12)
\qbezier(50,2)(50,7)(50,12)
\qbezier(55,2)(55,7)(55,12)
\qbezier(60,2)(60,7)(60,12)

\put(80,17){$B_2$}
\put(85,2){\circle*{1.2}}
\put(82.75,0.5){\tiny$w_0$}
\put(85,7){\circle*{1.2}}
\put(82.,7){\tiny$w_3$}
\put(85,12){\circle*{1.2}}
\put(82.75,13){\tiny$w_5$}
\put(90,2){\circle*{1.2}}
\put(89,0.5){\tiny$w_1$}
\put(90,12){\circle*{1.2}}
\put(87.75,13){\tiny$w_6$}
\put(95,2){\circle*{1.2}}
\put(95,0.5){\tiny$w_2$}
\put(95,7){\circle*{1.2}}
\put(95.75,7){\tiny$w_4$}
\put(95,12){\circle*{1.2}}
\put(95,13){\tiny$w_7$}
\put(90,9.5){\circle*{1.2}}
\put(87.,9.5){\tiny$w_8$}
\put(92.5,7){\circle*{1.2}}
\put(91.5,5.5){\tiny$w_9$}

\put(79,2){\footnotesize$a$}
\put(79,12){\footnotesize$b$}
\put(99.5,2){\footnotesize$b^{'}$}
\put(99.5,12){\footnotesize$a^{'}$}

\qbezier(81,2)(85,2)(99,2)
\qbezier(85,7)(85,7)(95,7)
\qbezier(81,12)(85,12)(99,12)
\qbezier(85,2)(85,7)(85,12)
\qbezier(90,2)(90,7)(90,12)
\qbezier(95,2)(95,7)(95,12)
\qbezier(90,9.5)(90,9.5)(92.5,7)

\end{picture}

%\end{floatrow}

\end{center}
\caption{\small\it 
The Blanu\v{s}a blocks $B_0,B_1$, and $B_2$.}
\label{FIG:BlanusaBlocks}
\end{figure}
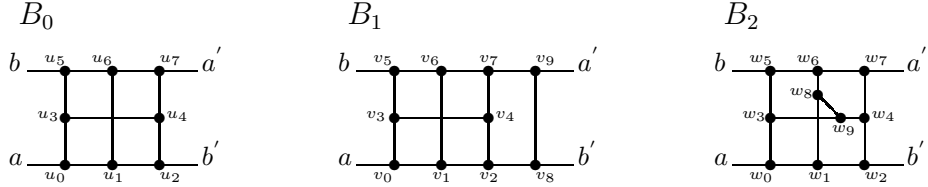

Consider the three subgraphs which we call {\sf Blanu\v{s}a  blocks} $B_0$, $B_1$, and $B_2$ in  Figure~\ref{FIG:BlanusaBlocks}.
For every $n$ we can construct a Blanu\v{s}a snark $B_n^j$, $j=1,2$, as follows,~\cite{Watkins1989, Ghebleh2008}.
Let $H_i$, $1\le i\le n-1$, be $n-1$ copies of the block $B_0$ and let $H_n$ be a copy of  $B_j$, $j=1,2$. Then connect the half-edges  $a^{'}$ and $b^{'}$
of each $H_{i}$ to  the half-edges $a$  and $b$ of the block $H_{i+1}$, respectively, for $i=1,\ldots,n-1$,  and likewise connect $H_n$ to $H_1$.

  Note that $B_1^{1}$ is the Petersen graph, and $B_1^{2}$ is the first Blanu\v{s}a snark.  The set of all members of
  all  $B_n^1$'s and all $B_n^2$'s are called {\sf generalized Blanu\v{s}a snarks}. 
 
 For every $B_n^{1}$ let $M_1$ be a set of all edges of $<u_0,u_1,u_2,u_6>$, $u_3u_5$, and $u_4u_7$ in all copies of $B_0$ and
 the edges of $<v_0,v_1,v_2,v_6>$, $v_3v_5,  v_4v_7$, and $v_8v_9$ in the copy of $B_1$.

 For every $B_n^{2}$ let $M_2$ be a set of all edges of $<u_0,u_3,u_4,u_5>$, $u_1u_2$, and $u_6u_7$ in all copies of $B_0$ and
 the edges $w_0w_1,w_2w_4, w_3w_5,  w_6w_7$, and $w_8w_9$ in the copy of $B_2$.
 
 By  a drawing of  Blanu\v{s}a snarks as exemplified in Figure~\ref{FIG:BlanusaSnarks}  and  by Theorem~\ref{Th:drawing} below, one  can see that $M_j$ is a  planarizing perfect pseudo-matching in $B_n^{j}$, for $j=1,2$.

 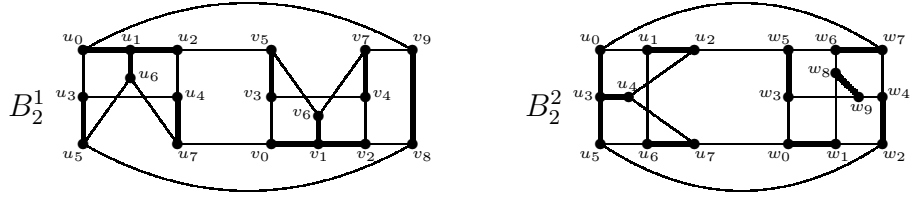
\begin{figure}[ht]

\setlength{\unitlength}{0.125cm}
\vspace{1cm}
\begin{center}

%\begin{floatrow} 

\begin{picture}(102,18)
\put(2,5){$B_2^{1}$}
\put(10,2){\circle*{1.2}}
\put(7.75,0.5){\tiny$u_5$}
\put(10,7){\circle*{1.2}}
\put(7.,7){\tiny$u_3$}
\put(10,12){\circle*{1.2}}
\put(7.75,13){\tiny$u_0$}
\put(15,12){\circle*{1.2}}
\put(14,13){\tiny$u_1$}
\put(15,9){\circle*{1.2}}
\put(15.8,9){\tiny$u_6$}
\put(20,2){\circle*{1.2}}
\put(20,0.5){\tiny$u_7$}
\put(20,7){\circle*{1.2}}
\put(20.75,7){\tiny$u_4$}
\put(20,12){\circle*{1.2}}
\put(20,13){\tiny$u_2$}

\put(30,2){\circle*{1.2}}
\put(27.75,0.5){\tiny$v_0$}
\put(30,7){\circle*{1.2}}
\put(27.,7){\tiny$v_3$}
\put(30,12){\circle*{1.2}}
\put(27.75,13){\tiny$v_5$}
\put(35,2){\circle*{1.2}}
\put(34,0.5){\tiny$v_1$}
\put(35,5){\circle*{1.2}}
\put(32.2,5){\tiny$v_6$}
\put(40,2){\circle*{1.2}}
\put(39,0.5){\tiny$v_2$}
\put(40,7){\circle*{1.2}}
\put(40.75,7){\tiny$v_4$}
\put(40,12){\circle*{1.2}}
\put(38.5,13){\tiny$v_7$}
\put(45,2){\circle*{1.2}}
\put(45,.5){\tiny$v_8$}
\put(45,12){\circle*{1.2}}
\put(45,13){\tiny$v_9$}

\put(57,5){$B_2^{2}$}
\put(65,2){\circle*{1.2}}
\put(62.75,0.5){\tiny$u_5$}
\put(65,7){\circle*{1.2}}
\put(62.,7){\tiny$u_3$}
\put(65,12){\circle*{1.2}}
\put(62.75,13){\tiny$u_0$}
\put(70,12){\circle*{1.2}}
\put(69,13){\tiny$u_1$}
\put(70,2){\circle*{1.2}}
\put(69,.5){\tiny$u_6$}
\put(75,2){\circle*{1.2}}
\put(75,0.5){\tiny$u_7$}
\put(68,7){\circle*{1.2}}
\put(66.7,8){\tiny$u_4$}
\put(75,12){\circle*{1.2}}
\put(75,13){\tiny$u_2$}

\put(85,2){\circle*{1.2}}
\put(82.75,0.5){\tiny$w_0$}
\put(85,7){\circle*{1.2}}
\put(81.8,7){\tiny$w_3$}
\put(85,12){\circle*{1.2}}
\put(82.75,13){\tiny$w_5$}
\put(90,2){\circle*{1.2}}
\put(89,0.5){\tiny$w_1$}
\put(90,12){\circle*{1.2}}
\put(87.75,13){\tiny$w_6$}
\put(95,2){\circle*{1.2}}
\put(95,0.5){\tiny$w_2$}
\put(95,7){\circle*{1.2}}
\put(95.5,7.2){\tiny$w_4$}
\put(95,12){\circle*{1.2}}
\put(95,13){\tiny$w_7$}
\put(90,9.5){\circle*{1.2}}
\put(87.,9.5){\tiny$w_8$}
\put(92.5,7){\circle*{1.2}}
\put(91.5,5.5){\tiny$w_9$}

\thicklines
\linethickness{0.15mm}

%\thicklines
%\linethickness{0.20mm}

\qbezier(30,2)(30,2)(45,2)
%\qbezier(10,2)(60,-18)(45,12)
\qbezier(10,2)(27.5,-8)(45,2)
\qbezier(10,12)(27.5,22)(45,12)
\qbezier(10,7)(10,7)(20,7)
\qbezier(30,7)(40,7)(40,7)
\qbezier(10,2)(10,2)(15,9)
\qbezier(20,2)(20,2)(15,9)
\qbezier(40,12)(40,12)(45,12)
\qbezier(30,12)(30,12)(35,5)
\qbezier(40,12)(40,12)(35,5)
\qbezier(10,2)(10,7)(10,12)
\qbezier(30,2)(30,7)(30,12)
\qbezier(40,2)(40,7)(40,12)
\qbezier(20,2)(20,7)(20,12)

\qbezier(20,12)(20,12)(30,12)
\qbezier(30,2)(30,2)(20,2)

\qbezier(85,2)(85,2)(95,2)
\qbezier(65,2)(65,2)(65,12)
\qbezier(68,7)(75,2)(75,2)
\qbezier(85,7)(85,7)(95,7)
%\qbezier(65,12)(85,12)(95,12)
\qbezier(90,2)(90,7)(90,12)
\qbezier(95,2)(95,7)(95,12)
\qbezier(90,9.5)(90,9.5)(92.5,7)

\qbezier(68,7)(75,12)(75,12)
\qbezier(85,12)(85,12)(95,12)

\qbezier(85,2)(75,2)(75,2)
\qbezier(85,12)(85,12)(75,12)

\qbezier(65,12)(80,22)(95,12)
\qbezier(65,2)(80,-8)(95,2)

\qbezier(70,2)(70,2)(70,12)

\qbezier(65,2)(70,2)(70,2)

\qbezier(70,12)(65,12)(65,12)
\qbezier(85,2)(85,7)(85,12)
\qbezier(90,2)(95,2)(95,2)
\qbezier(95,7)(95,7)(95,12)
\qbezier(85,7)(85,7)(92.5,7)
\qbezier(90,9.5)(90,12)(90,12)

\thicklines
\linethickness{0.6mm}
\qbezier(15,12)(15,9)(15,9)
\qbezier(10,12)(10,12)(20,12)
\qbezier(10,7)(10,7)(10,2)
\qbezier(20,7)(20,7)(20,2)
\qbezier(30,7)(30,7)(30,12)
\qbezier(35,2)(35,5)(35,5)
\qbezier(40,7)(40,7)(40,12)
\qbezier(45,2)(45,7)(45,12)
\qbezier(30,2)(40,2)(40,2)

\qbezier(70,12)(70,12)(75,12)
\qbezier(65,12)(65,12)(65,2)
\qbezier(65,7)(65,7)(68,7)
\qbezier(70,2)(75,2)(75,2)
\qbezier(85,7)(85,7)(85,12)
\qbezier(90,9.5)(90,9.5)(92.5,7)
\qbezier(95,2)(95,7)(95,7)
\qbezier(95,12)(90,12)(90,12)
\qbezier(85,2)(85,2)(90,2)

\end{picture}

%\end{floatrow}

\end{center}
\caption{\small\it 
The Blanu\v{s}a snarks $B_2^1$ and $B_2^2$ and their planarizing perfect pseudo-matchings exhibited by the bold-face edges.}
\label{FIG:BlanusaSnarks}
\end{figure}

\item[(b)] {\bf Flower snarks}

For an odd integer $k\ge 3$, the {\sf flower snark} $J_k$ is constructed as follows. 
$V(J_k)=\{v_1,v_2,\ldots,v_k\}\cup \{u_1^1,u_1^2,u_1^3,u_2^1,u_2^2,u_2^3,\ldots,u_k^1,u_k^2,u_k^3\}$.
 $J_k$ is comprised of a cycle $C_1=u_1^1u_2^1\ldots u_k^1u_1^1$ of length $k$ 
and a cycle 
$C_2=u_1^2u_2^2\ldots u_k^2u_1^3u_2^3\ldots u_k^3u_1^2$ of length $2k$, and in addition, each vertex 
$v_i,\ 1\le i\le k$, is adjacent to $u_i^1, u_i^2,$ and $u_i^3$.

Note that for  even  $k\ge 4$, $C_1$ is an even cycle.
Color the edges of $C_1$ with color set $\{2,3\}$; color $u_i^jv_i,\ i=1,\ldots,k,\ j=2,3$, with color $j$; color $u_{2i-1}^3u_{2i}^3,\ i=1,\ldots,k/2$, with color $2$;
color $u_{2i-1}^2u_{2i}^2,\ i=1,\ldots,k/2$, with color $3$; and color the remaining edges with color $1$.
Thus $J_k$ is $3-$edge-colorable for even $k$.
\\

Now  consider the perfect pseudo-matching $M=\{v_iu_i^1,v_iu_i^2,v_iu_i^3\ :\ 1\le i\le k\}$ in the flower snark $J_k$, $k\ge 3$.
In fact, contract every claw induced by $\{u_i^1,u_i^2,u_i^3,v_i\}$ to a new vertex $v_i^{'}$, $1\le i\le k$.  Then  $J_k/M$ consists of three 
cycles $v_1^{'}v_2^{'}\ldots v_k^{'}v_1^{'}$, so  $J_k/M$  is planar.

  \item[(c)] {\bf Goldberg snarks}
     
     Goldberg~\cite{Goldberg} constructed an infinite family of snarks, $G_5,G_7,\ldots$.
For every odd $k\ge 5$, the vertex set of the {\sf Goldberg snark} $G_k$ satisfies
 $$V(G_k)=\{v_j^t\ :\ 1\le t\le k,\ 1\le j\le 8\}.$$ 
 The subgraph $B_t$ induced by $$\{v_1^t,v_2^t,\ldots,v_8^t\} \mbox{ and }
\{v_1^tv_2^t,v_1^tv_7^t,v_2^tv_8^t,v_3^tv_4^t,v_3^tv_8^t,v_4^tv_7^t,v_5^tv_6^t,v_6^tv_7^t,v_6^tv_8^t\}$$ is 
called a {\sf basic block.}
The Goldberg snark is constructed by joining each basic block $B_t$ with  $B_{t+1}$  by the  edges
$v_2^tv_1^{t+1},v_4^tv_3^{t+1},$ and  $v_5^tv_5^{t+1}$
 where 
the subscripts of basic blocks and the superscripts of vertices are read modulo $k$.

  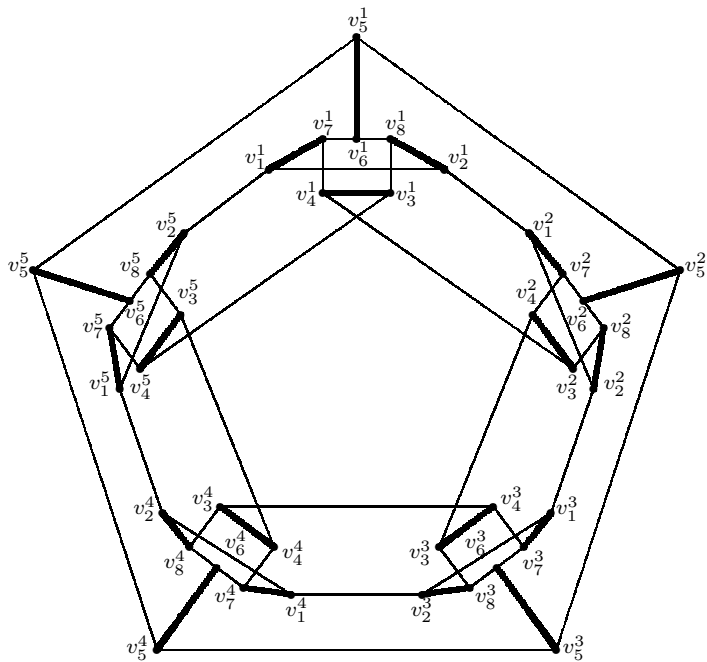
\begin{figure}[ht]

\setlength{\unitlength}{.09cm}
%\vspace{1cm}
\begin{center}
%\begin{floatrow} 
\begin{picture}(70,80)
\put(0,0){\circle*{1.2}}
\put(-4.5,-.5){\scriptsize$v_5^4$}
\put(8.817,12.135){\circle*{1.2}}
\put(10,15){\scriptsize$v_6^4$}
\put(12.817,9.135){\circle*{1.2}}
\put(8.5,7){\scriptsize$v_7^4$}
\put(4.817,15.135){\circle*{1.2}}
\put(1,12){\scriptsize$v_8^4$}
\put(17.317,15.135){\circle*{1.2}}
\put(18.5,13.5){\scriptsize$v_4^4$}
\put(9.317,21.135){\circle*{1.2}}
\put(5.2,21.5){\scriptsize$v_3^4$}
\put(19.817,8.135){\circle*{1.2}}
\put(19,5){\scriptsize$v_1^4$}
\put(0.817,20.135){\circle*{1.2}}
\put(-3.5,20){\scriptsize$v_2^4$}

\put(59,0){\circle*{1.2}}
\put(60,-.5){\scriptsize$v_5^3$}
\put(50.183,12.135){\circle*{1.2}}
\put(45.5,15){\scriptsize$v_6^3$}
\put(46.183,9.135){\circle*{1.2}}
\put(47,7){\scriptsize$v_8^3$}
\put(54.183,15.135){\circle*{1.2}}
\put(54,12){\scriptsize$v_7^3$}
\put(41.683,15.135){\circle*{1.2}}
\put(37.25,13.5){\scriptsize$v_3^3$}
\put(49.683,21.135){\circle*{1.2}}
\put(50.75,21.5){\scriptsize$v_4^3$}
\put(39.183,8.135){\circle*{1.2}}
\put(37.5,5){\scriptsize$v_2^3$}
\put(58.183,20.135){\circle*{1.2}}
\put(59,20){\scriptsize$v_1^3$}

\put(77.3,56.05){\circle*{1.2}}
\put(78,56){\scriptsize$v_5^2$}
\put(63,51.52){\circle*{1.2}}
\put(60.5,48.){\scriptsize$v_6^2$}
\put(66,47.52){\circle*{1.2}}
\put(67,47){\scriptsize$v_8^2$}
\put(60,55.52){\circle*{1.2}}
\put(61,56){\scriptsize$v_7^2$}
\put(61.5,41.52){\circle*{1.2}}
\put(59,38){\scriptsize$v_3^2$}
\put(55.5,49.52){\circle*{1.2}}
\put(53,52.){\scriptsize$v_4^2$}
\put(64.5,38.52){\circle*{1.2}}
\put(66,38.5){\scriptsize$v_2^2$}
\put(55,61.52){\circle*{1.2}}
\put(55.5,62){\scriptsize$v_1^2$}

\put(29.5,90.5){\circle*{1.2}}
\put(28.2,92.2){\scriptsize$v_5^1$}
\put(29.5,75.5){\circle*{1.2}}
\put(28.2,72.5){\scriptsize$v_6^1$}
\put(24.5,75.5){\circle*{1.2}}
\put(23,77){\scriptsize$v_7^1$}
\put(34.5,75.5){\circle*{1.2}}
\put(33.75,77){\scriptsize$v_8^1$}
\put(24.5,67.5){\circle*{1.2}}
\put(20.5,66.5){\scriptsize$v_4^1$}
\put(34.5,67.5){\circle*{1.2}}
\put(35.3,66.5){\scriptsize$v_3^1$}

\put(42.5,71){\circle*{1.2}}
\put(43,72){\scriptsize$v_2^1$}
\put(16.5,71){\circle*{1.2}}
\put(13.,72){\scriptsize$v_1^1$}

\put(-18.3,56.05){\circle*{1.2}}
\put(-22,56){\scriptsize$v_5^5$}
\put(-4,51.52){\circle*{1.2}}
\put(-4.8,49){\scriptsize$v_6^5$}
\put(-7,47.52){\circle*{1.2}}
\put(-11,47){\scriptsize$v_7^5$}
\put(-1,55.52){\circle*{1.2}}
\put(-5.5,56){\scriptsize$v_8^5$}
\put(-2.5,41.52){\circle*{1.2}}
\put(-4,38){\scriptsize$v_4^5$}
\put(3.5,49.52){\circle*{1.2}}
\put(3,52.){\scriptsize$v_3^5$}
\put(4,61.52){\circle*{1.2}}
\put(-.20,62){\scriptsize$v_2^5$}
\put(-5.5,38.52){\circle*{1.2}}
\put(-10.,38.5){\scriptsize$v_1^5$}

\thicklines
\linethickness{0.15mm}
%\thicklines
%\linethickness{0.20mm}

\qbezier(0,0)(0,0)(59,0)
\qbezier(0,0)(0,0)(-18.3,56.05)
\qbezier(77.3,56.05)(77.3,56.05)(59,0)
\qbezier(77.3,56.05)(77.3,56.05)(29.5,90.5)
\qbezier(-18.3,56.05)(-18.3,56.05)(29.5,90.5)

\qbezier(12.817,9.135)(12.817,9.135)(4.817,15.135)
\qbezier(17.317,15.135)(17.317,15.135)(12.817,9.135)
\qbezier(9.317,21.135)(4.817,15.135)(4.817,15.135)

\qbezier(46.183,9.135)(46.183,9.135)(54.183,15.135)
\qbezier(46.183,9.135)(43.183,13.135)(41.683,15.135)
\qbezier(54.183,15.135)(54.183,15.135)(49.683,21.135)

\qbezier(66,47.52)(66,47.52)(60,55.52)
\qbezier(66,47.52)(61.5,41.52)(61.5,41.52)
\qbezier(60,55.52)(60,55.52)(55.5,49.52)

\qbezier(24.5,75.5)(24.5,75.5)(34.5,75.5)
\qbezier(24.5,67.5)(24.5,70.5)(24.5,75.5)
\qbezier(34.5,67.5)(34.5,70.5)(34.5,75.5)

\qbezier(-7,47.52)(-7,47.52)(-1,55.52)
\qbezier(-7,47.52)(-2.5,41.52)(-2.5,41.52)
\qbezier(-1,55.52)(-1,55.52)(3.5,49.52)

\qbezier(34.5,67.5)(-2.5,41.52)(-2.5,41.52)
\qbezier(17.317,15.135)(3.5,49.52)(3.5,49.52)
\qbezier(24.5,67.5)(24.5,67.5)(61.5,41.52)
\qbezier(41.683,15.135)(55.5,49.52)(55.5,49.52)
\qbezier(9.317,21.135)(49.683,21.135)(49.683,21.135)

\qbezier(39.183,8.135)(19.817,8.135)(19.817,8.135)

\qbezier(58.183,20.135)(39.183,8.135)(39.183,8.135)
\qbezier(19.817,8.135)(0.817,20.135)(0.817,20.135)
\qbezier(64.5,38.52)(55,61.52)(55,61.52)
\qbezier(42.5,71)(16.5,71)(16.5,71)
\qbezier(4,61.52)(-5.5,38.52)(-5.5,38.52)

\qbezier(64.5,38.52)(64.5,38.52)(58.183,20.135)
\qbezier(-5.5,38.52)(-5.5,38.52)(0.817,20.135)
\qbezier(42.5,71)(42.5,71)(55,61.52)
\qbezier(4,61.52)(4,61.52)(16.5,71)

\thicklines
\linethickness{0.6mm}
\qbezier(0,0)(0,0)(8.817,12.135)
\qbezier(59,0)(59,0)(50.183,12.135)
\qbezier(29.5,75.5)(29.5,75.5)(29.5,90.5)
\qbezier(-18.3,56.05)(-18.3,56.05)(-4,51.52)
\qbezier(77.3,56.05)(77.3,56.05)(63,51.52)

%\qbezier(15.817,13.135)(15.817,13.135)(7.817,19.135)
\qbezier(17.317,15.135)(17.317,15.135)(9.317,21.135)

%\qbezier(43.183,13.135)(43.183,13.135)(51.183,19.135)
\qbezier(41.683,15.135)(41.683,15.135)(49.683,21.135)

%\qbezier(63,43.52)(63,43.52)(57,51.52)
\qbezier(61.5,41.52)(61.5,41.52)(55.5,49.52)

%\qbezier(24.5,70.5)(24.5,70.5)(34.5,70.5)
\qbezier(24.5,67.5)(24.5,67.5)(34.5,67.5)

%\qbezier(-4,43.52)(-4,43.52)(2,51.52)
\qbezier(-2.5,41.52)(-2.5,41.52)(3.5,49.52)

\qbezier(12.817,9.135)(19.817,8.135)(19.817,8.135)
\qbezier(4.817,15.135)(0.817,20.135)(0.817,20.135)
\qbezier(58.183,20.135)(58.183,20.135)(54.183,15.135)
\qbezier(39.183,8.135)(39.183,8.135)(46.183,9.135)
\qbezier(64.5,38.52)(64.5,38.52)(66,47.52)
\qbezier(55,61.52)(55,61.52)(60,55.52)
\qbezier(34.5,75.5)(42.5,71)(42.5,71)
\qbezier(16.5,71)(16.5,71)(24.5,75.5)

\qbezier(-1,55.52)(4,61.52)(4,61.52)
\qbezier(-5.5,38.52)(-5.5,38.52)(-7,47.52)

\end{picture}

%\end{floatrow}

\end{center}
%\vspace*{-7mm}
\caption{\small\it 
The Goldberg snark $G_5$ and a planarizing perfect matching $M$ visualized by bold-face edges.}
\label{FIG:Goldberg5}
\end{figure}

Now  consider the perfect matching $M=\{v_1^tv_7^t,v_2^tv_8^t,v_3^tv_4^t,v_5^tv_6^t\ :\ 1\le t\le k\}$ in the Goldberg snark $G_k$, $k\ge 5$. 

By such a drawing of  $G_k$ as exemplified by Figure~\ref{FIG:Goldberg5}  and  by Theorem~\ref{Th:drawing} below,  $G_k/M$ is   planar.

\item[(d)] {\bf Celmins-Swart snarks}
 
  Planarizing perfect pseudo-matchings for the two Celmins-Swart  snarks are shown in  Figure~\ref{FIG:Celmins-Swart}.
 \begin{figure}[ht]

\setlength{\unitlength}{0.125cm}
\vspace{1cm}
\begin{center}

%\begin{floatrow} 

\begin{picture}(102,20)

\put(20,0){\circle*{1.2}}
\put(25,0){\circle*{1.2}}
\put(20,5){\circle*{1.2}}
\put(25,5){\circle*{1.2}}
\put(7.5,5){\circle*{1.2}}
\put(12.5,5){\circle*{1.2}}
\put(32.5,5){\circle*{1.2}}
\put(37.5,5){\circle*{1.2}}
\put(7.5,10){\circle*{1.2}}
\put(12.5,10){\circle*{1.2}}
\put(32.5,10){\circle*{1.2}}
\put(37.5,10){\circle*{1.2}}
\put(22.5,10){\circle*{1.2}}
\put(10,15){\circle*{1.2}}
\put(22.5,15){\circle*{1.2}}
\put(35,15){\circle*{1.2}}
\put(35,20){\circle*{1.2}}
\put(10,20){\circle*{1.2}}
\put(15,20){\circle*{1.2}}
\put(30,20){\circle*{1.2}}
\put(5,20){\circle*{1.2}}
\put(40,20){\circle*{1.2}}
\put(0,25){\circle*{1.2}}
\put(45,25){\circle*{1.2}}
\put(15,25){\circle*{1.2}}
\put(30,25){\circle*{1.2}}

\put(64,25){\circle*{1.2}}
\put(96,25){\circle*{1.2}}
\put(80,25){\circle*{1.2}}
\put(80,21){\circle*{1.2}}
\put(92,21){\circle*{1.2}}
\put(76,21){\circle*{1.2}}
\put(84,21){\circle*{1.2}}
\put(68,21){\circle*{1.2}}
\put(76,17){\circle*{1.2}}
\put(80,17){\circle*{1.2}}
\put(84,17){\circle*{1.2}}
\put(80,14){\circle*{1.2}}
\put(82,7){\circle*{1.2}}
\put(78,7){\circle*{1.2}}
\put(75,9){\circle*{1.2}}
\put(85,9){\circle*{1.2}}
\put(72,8){\circle*{1.2}}
\put(88,8){\circle*{1.2}}
\put(70,4){\circle*{1.2}}
\put(68,6){\circle*{1.2}}
\put(64,4){\circle*{1.2}}
\put(68,0){\circle*{1.2}}
\put(96,4){\circle*{1.2}}
\put(92,0){\circle*{1.2}}
\put(90,4){\circle*{1.2}}
\put(92,6){\circle*{1.2}}

\thicklines
\linethickness{0.15mm}

%\thicklines
%\linethickness{0.20mm}
\qbezier(20,0)(20,0)(25,5)
\qbezier(20,5)(20,5)(25,0)
\qbezier(7.5,5)(7.5,5)(12.5,10)
\qbezier(7.5,10)(7.5,10)(12.5,5)
\qbezier(37.5,5)(37.5,5)(32.5,10)
\qbezier(37.5,10)(37.5,10)(32.5,5)
\qbezier(12.5,10)(20,5)(20,5)
\qbezier(32.5,10)(25,5)(25,5)
\qbezier(5,20)(0,12.5)(7.5,5)
\qbezier(0,25)(-2,17.5)(7.5,10)
\qbezier(40,20)(45,12.5)(37.5,5)
\qbezier(45,25)(47,17.5)(37.5,10)
\qbezier(12.5,5)(12.5,5)(20,0)
\qbezier(32.5,5)(32.5,5)(25,0)
\qbezier(0,25)(0,25)(45,25)
\qbezier(5,20)(5,20)(15,20)
\qbezier(30,20)(30,20)(40,20)
\qbezier(30,20)(30,20)(22.5,15)
\qbezier(15,20)(15,20)(22.5,15)

\qbezier(64,25)(64,25)(96,25)
\qbezier(68,21)(68,21)(76,21)
\qbezier(64,25)(64,25)(68,6)
\qbezier(68,21)(68,21)(64,4)
\qbezier(84,21)(84,21)(92,21)
\qbezier(96,25)(96,25)(92,6)
\qbezier(92,21)(92,21)(96,4)
\qbezier(68,0)(68,0)(92,0)
\qbezier(64,4)(64,4)(96,4)
\qbezier(68,0)(68,0)(68,6)
\qbezier(92,0)(92,0)(92,6)

\qbezier(76,21)(76,21)(76,17)
\qbezier(84,21)(84,21)(84,17)

\qbezier(82,7)(82,7)(76,17)
\qbezier(78,7)(78,7)(84,17)

\qbezier(80,14)(80,14)(75,9)
\qbezier(80,14)(80,14)(85,9)

\qbezier(78,7)(75,9)(75,9)
\qbezier(82,7)(82,7)(85,9)

\thicklines
\linethickness{0.6mm}
\qbezier(20,0)(20,0)(25,0)
\qbezier(7.5,5)(7.5,5)(12.5,5)
\qbezier(37.5,5)(37.5,5)(32.5,5)
\qbezier(7.5,10)(7.5,10)(10,15)
\qbezier(37.5,10)(37.5,10)(35,15)
\qbezier(12.5,10)(12.5,10)(10,15)
\qbezier(32.5,10)(32.5,10)(35,15)
\qbezier(10,15)(10,15)(10,20)
\qbezier(35,15)(35,15)(35,20)
\qbezier(20,5)(20,5)(22.5,10)
\qbezier(25,5)(25,5)(22.5,10)
\qbezier(22.5,10)(22.5,10)(22.5,15)
\qbezier(5,20)(5,20)(0,25)
\qbezier(40,20)(40,20)(45,25)
\qbezier(15,20)(15,20)(15,25)
\qbezier(30,20)(30,20)(30,25)

\qbezier(80,25)(80,25)(80,21)
\qbezier(76,21)(76,21)(84,21)
\qbezier(64,25)(64,25)(68,21)
\qbezier(96,25)(96,25)(92,21)
\qbezier(76,17)(76,17)(84,17)
\qbezier(80,17)(80,17)(80,14)
\qbezier(78,7)(78,7)(82,7)
\qbezier(64,4)(64,4)(68,0)
\qbezier(96,4)(96,4)(92,0)
\qbezier(90,4)(90,4)(88,8)
\qbezier(92,6)(92,6)(88,8)
\qbezier(88,8)(88,8)(85,9)
\qbezier(70,4)(70,4)(72,8)
\qbezier(68,6)(68,6)(72,8)
\qbezier(72,8)(72,8)(75,9)

\end{picture}

%\end{floatrow}

\end{center}
\caption{\small\it 
The  Celmins-Swart  snarks and their planarizing perfect pseudo-matchings exhibited by the bold-face edges
$($cf. Theorem~$\ref{Th:drawing}$ below$)$.}
\label{FIG:Celmins-Swart}
\end{figure}
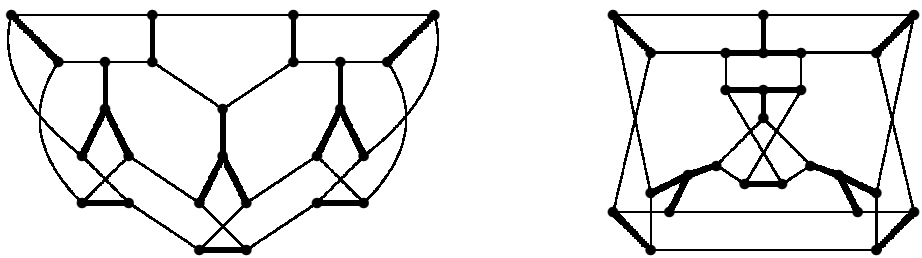
  
   \end{description}

 We also examined other snarks and determined planarizing perfect pseudo-matchings. 
However, we cannot conclude that every snark has a planarizing perfect pseudo-matching (otherwise we had an easy proof of the CDC Conjecture).

\begin{example}
By using computer programming we found a
  snark of order $26$ shown in  Figure~$\ref{FIG:Snark26}$  which has  no planarizing perfect pseudo-matching but it has a {\rm SUD}-$K_5-$minor-free perfect pseudo-matching.

  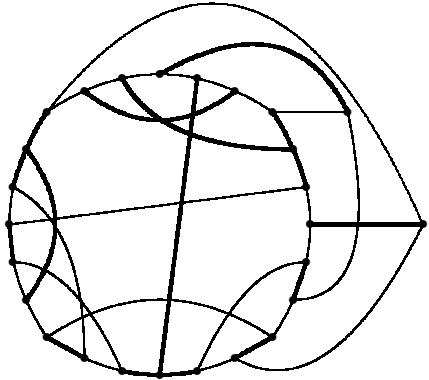
\begin{figure}[ht]

\setlength{\unitlength}{.1cm}
\vspace{1cm}
\begin{center}

%\begin{floatrow} 

\begin{picture}(50,40)

\put(0,0){\circle*{1.2}}

 \put(5,-2.75){\circle*{1.2}}
 \put(10,-4.5){\circle*{1.2}}
 \put(15,-5){\circle*{1.2}}
 \put(20,-4.5){\circle*{1.2}}
 \put(25,-2.75){\circle*{1.2}}

\put(30,0){\circle*{1.2}}

 \put(32.75,5){\circle*{1.2}}
 \put(34.5,10){\circle*{1.2}}
 \put(35,15){\circle*{1.2}}
 \put(34.5,20){\circle*{1.2}}
 \put(32.75,25){\circle*{1.2}}

\put(30,30){\circle*{1.2}}

 \put(5,32.75){\circle*{1.2}}
 \put(10,34.5){\circle*{1.2}}
 \put(15,35){\circle*{1.2}}
 \put(20,34.5){\circle*{1.2}}
 \put(25,32.75){\circle*{1.2}}

\put(0,30){\circle*{1.2}}

\put(-2.75,5){\circle*{1.2}}
 \put(-4.5,10){\circle*{1.2}}
 \put(-5,15){\circle*{1.2}}
 \put(-4.5,20){\circle*{1.2}}
 \put(-2.75,25){\circle*{1.2}}
 
  \put(50,15){\circle*{1.2}}
 
 \put(40,30){\circle*{1.2}}

\thicklines
\linethickness{0.15mm}

%\thicklines
%\linethickness{0.20mm}

\qbezier(40,30)(30,30)(30,30)
\qbezier(40,30)(45,5)(32.75,5)
\qbezier(50,15)(28,65)(0,30)
\qbezier(50,15)(37.5,-10)(25,-2.75)

\qbezier(0,30)(15,40)(30,30)
\qbezier(0,30)(-10,15)(0,0)
\qbezier(0,0)(15,-10)(30,0)
\qbezier(30,0)(40,15)(30,30)

\qbezier(-5,15)(-5,15)(34.5,20)

\qbezier(-4.5,20)(5,15)(5,-2.75)
\qbezier(-4.5,10)(3.5,10)(10,-4.5)

\qbezier(0,0)(15,10)(30,0)

\qbezier(34.5,10)(26.5,10)(20,-4.5)
\thicklines
\linethickness{0.4mm}
\qbezier(15,-5)(15,-5)(20,34.5)
\qbezier(0,0)(0,0)(5,-2.75)
\qbezier(10,-4.5)(15,-5.5)(20,-4.5)
\qbezier(30,0)(30,0)(25,-2.75)
\qbezier(32.75,5)(32.75,5)(34.5,10)
\qbezier(34.5,20)(33,25.5)(30,30)
\qbezier(10,34.5)(15,25)(32.75,25)
\qbezier(5,32.75)(15,25)(25,32.75)
\qbezier(40,30)(32.5,45)(15,35)
\qbezier(0,30)(-3,25.5)(-4.5,20)
\qbezier(-2.75,5)(5,15)(-2.75,25)
\qbezier(50,15)(50,15)(35,15)
\qbezier(-5,15)(-5,15)(-4.5,10)
\end{picture}

%\end{floatrow}

\end{center}
\vspace{1cm}
\caption{\small\it 
An snark of order $26$ without any planarizing perfect pseudo-matchings and its  {\rm SUD}-$K_5-$minor-free perfect pseudo-matching visualized by bold-face edges.}
\label{FIG:Snark26}
\end{figure}

\end{example}  
 
 In fact, for all snarks with up to $32$ vertices which contain a stable dominating cycle it was checked whether they contain a planarizing / $K_5-$minor-free / SUD-$K_5-$minor-free perfect pseudo-matching (the perfect pseudo-matching does not have to be the complement of the stable dominating cycle). It was determined that \\
 
   {\it  there are $4615$ snarks $G_3$ with up to $32$ vertices which have a stable      dominating cycle. $4612$  of them contain a planarizing perfect pseudo-matching, $2$ have no planarizing perfect pseudo-matching, but a $K_5-$minor-free perfect pseudo-matching, and one has no $K_5-$minor-free perfect pseudo-matching but a {\rm SUD}-$K_5-$minor-free perfect pseudo-matching. So all $4615$ snarks have a 
{\rm SUD}-$K_5-$minor-free perfect pseudo-matching $M$ $($which in turn guarantees that $G_3/M$ has a  compatible cycle decomposition$)$.}\\

Moreover, we checked also directly the complements of the stable dominating cycles and found the following. 
 Surprisingly, for all the stable dominating cycles in all snarks with up to $32$ vertices the complements where never (!) planarizing. Furthermore, there is also no stable dominating cycle whose complement is a $K_5-$minor-free perfect pseudo-matching. Out of the $4615$ snarks with stable dominating cycles there are $3045$ snarks which contain a stable dominating cycle whose complement is a {\rm SUD}-$K_5-$minor-free perfect pseudo-matching. For the other $1570$ snarks the complements of all stable dominating cycles are a {\rm SUD}-$K_5-$minor perfect pseudo-matching, but nevertheless the contraction of these perfect pseudo-matchings have a compatible cycle decomposition. So in this case we see that the complement of all stable dominating cycles is at least a perfect pseudo-matching after whose contraction the eulerian graph has a cycle decomposition compatible with the eulerian trail corresponding to the given dominating cycle in the snark. 
\\
   
Moreover, $3-$edge-colorability and compatible cycle decompositions can be related; see our next result.

\begin{theorem}\label{Th:3edgecoloring}
Let $G$ be a cubic graph and let $M$ be a perfect pseudo-matching in $G$. Let $\mathcal{T}$ be the transition system in $G/M$  defined by pairs of adjacent edges in $G_0= G\setminus E(M)$.  For a compatible cycle decomposition $\mathcal{S}$ of $(G/M,\mathcal{T})$ define the intersection graph $\mathcal{I(S)}$ whose vertices correspond to the elements of $\mathcal{S}$ and $xy$ is in $E(\mathcal{I(S)})$ if and only if the corresponding cycles $C_x$ and $C_y$ in $\mathcal{S}$ have at least one vertex in common. The following  {\rm \bf (i)} and
{\rm \bf (ii)} are equivalent.

\begin{description}
\item[(i)] $G$ is $3-$edge-colorable.

\item[(ii)] $\chi(\mathcal{I(S)}) \leq 3$ for at least one compatible cycle decomposition $\mathcal{S}$ of $(G/M, \mathcal{T})$.

Furthermore,

\item[(iii)] $\chi(\mathcal{I(S)}) = 2$ if  and only if $M$ is a perfect matching and represents a color class in a $3-$edge-coloring of $G$.
\end{description}
\end{theorem}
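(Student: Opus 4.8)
The natural temptation is to lift a compatible cycle decomposition $\mathcal S$ of $(G/M,\mathcal T)$ to the cycle double cover $\mathcal S'\cup\mathcal C_0$ of $G$ and argue with \emph{its} intersection graph; I would avoid that, because the cycles of $\mathcal C_0$ cannot in general be fitted into a proper $3$-colouring of the intersection graph of the whole double cover, so that route stalls. Instead the plan is to build a direct dictionary between proper colourings of $\mathcal I(\mathcal S)$ and edge-colourings of $G$. The first step is to record the local structure of $(G/M,\mathcal T)$: a $K_2$-component $\{x,y\}$ of $M$ becomes a vertex $v$ of degree $4$ with $\mathcal T(v)$ the pair of $G_0$-edges at $x$ together with the pair at $y$; a $K_{1,3}$-component with centre $c$ and leaves $\ell_1,\ell_2,\ell_3$ becomes a vertex $v$ of degree $6$ with $\mathcal T(v)$ the three pairs of $G_0$-edges at the leaves. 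Since cycles of a cycle decomposition are $2$-regular, exactly two cycles of $\mathcal S$ run through each degree-$4$ vertex and exactly three through each degree-$6$ vertex, and compatibility forces the three cycles through a degree-$6$ vertex to realise the three edges of a ``triangle'' on $\{\ell_1,\ell_2,\ell_3\}$, each cycle entering the claw through two distinct spokes $\ell_ic,\ell_jc$. Hence these cycles span an edge, resp.\ a triangle, of $\mathcal I(\mathcal S)$, which already gives $\chi(\mathcal I(\mathcal S))\ge 3$ once $M$ has a claw component.

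For (ii)$\Rightarrow$(i), fix a CCD $\mathcal S$ and a proper colouring $\phi\colon\mathcal S\to\{1,2,3\}$ of $\mathcal I(\mathcal S)$. Colour every edge of $G_0=G\setminus E(M)$ (equivalently, every edge of $G/M$) by $\phi$ of the unique cycle of $\mathcal S$ containing it, and colour every edge $m$ of $M$ by the colour \emph{not} occurring on the cycles of $\mathcal S$ that are responsible for $m$: for $m=xy$ in a $K_2$-component, the two cycles through $\langle x,y\rangle$; for a spoke $m=\ell_ic$ of a claw, the two cycles through $\langle K_{1,3}\rangle$ that enter via $\ell_i$. In each case those two cycles share a vertex of $G/M$, hence get distinct colours, so the ``missing colour'' is well defined; at a claw the triangle pattern makes the three spoke-colours pairwise distinct. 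A short verification at each of the three kinds of vertex of $G$ — the two ends of a $K_2$, the centre of a claw, a leaf of a claw — shows the three incident edges receive the three distinct colours, so $G$ is $3$-edge-colourable.

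For (i)$\Rightarrow$(ii), given a proper $3$-edge-colouring $c$ of $G$ with classes $M_1,M_2,M_3$, set $M_i^0=M_i\cap E(G_0)$. Using only that the three edges of $G$ at every vertex carry the three colours, a degree count at each vertex of $G/M$ yields $\deg_{M_i^0}(v)\in\{0,2\}$; hence each $M_i^0$ is a vertex-disjoint union of cycles, and $\mathcal S:=\{\text{cycles of }M_1^0\}\cup\{\text{cycles of }M_2^0\}\cup\{\text{cycles of }M_3^0\}$ is a cycle decomposition of $G/M$. It is compatible with $\mathcal T$ because the two edges of any transition pair meet at a common vertex of $G$ and so carry different colours, so no monochromatic cycle can contain both; and colouring each cycle of $\mathcal S$ by its common colour properly colours $\mathcal I(\mathcal S)$ with at most three colours, the cycles inside a fixed $M_i^0$ being pairwise disjoint. (The same construction shows that if $(G/M,\mathcal T)$ has no CCD at all then $G$ is not $3$-edge-colourable, so the equivalence survives that degenerate case.)

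For (iii): if $M$ has a claw component then $\mathcal I(\mathcal S)$ contains a triangle for every CCD, so $\chi(\mathcal I(\mathcal S))=2$ forces $M$ to be a perfect matching; applying the construction of (ii)$\Rightarrow$(i) to a $2$-colouring then puts colours $\{1,2\}$ on all of $E(G_0)$ and colour $3$ on every edge of $M$, so $M$ is the colour class $M_3$. Conversely, if $M=M_1$ is a colour class of a $3$-edge-colouring, then $M_1^0=\emptyset$ while $M_2^0$ and $M_3^0$ are $2$-factors of $G/M$, so the construction of (i)$\Rightarrow$(ii) produces a CCD $\mathcal S$ whose intersection graph is bipartite with parts the cycles of $M_2^0$ and the cycles of $M_3^0$, and has at least one edge since every vertex of $G/M$ lies on one cycle of each part; thus $\chi(\mathcal I(\mathcal S))=2$. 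The point that needs care throughout is the local analysis at degree-$6$ vertices — proving that compatibility really does force the triangle pattern of the three through-cycles, so that the missing-colour assignment to the three spokes is consistent at the centre, and then checking that the colour triple at a claw leaf comes out rainbow; degree-$4$ vertices are routine, and it is precisely this local rigidity of the claw case that a detour through a cycle double cover of $G$ fails to exploit.
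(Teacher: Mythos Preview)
Your proof is correct and follows the same approach as the paper: for (i)$\Rightarrow$(ii) you both push a proper $3$-edge-colouring of $G$ down to $G/M$, observe that each colour class has even degree ($0$ or $2$) at every vertex of $G/M$, and take the resulting monochromatic cycles as the compatible decomposition $\mathcal S$, colouring $\mathcal I(\mathcal S)$ by the inherited colours. The paper disposes of (ii)$\Rightarrow$(i) with the phrase ``by reversing the argument'' and of (iii) with ``easily established'', whereas you spell out the missing-colour rule on $E(M)$ and the triangle-in-$\mathcal I(\mathcal S)$ forced by each claw component; these are exactly the details the paper leaves implicit, so there is no genuine divergence in method.
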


\begin{proof}
Let $G$ be a cubic graph with a proper edge coloring $c$ with color set $\{1,2,3\}$.
The coloring $c$ induces a color on every edge of $G/M$. Since $G$ is $3-$regular and $c$ is  proper, every vertex of $G/M$
is adjacent to $0$ or $2$ edges of color $i$, $i=1,2,3$. Therefore, the subgraph induced by $i-$colored edges in $G/M$ is a disjoint union of 
$i-$colored cycles, $i=1,2,3$; moreover, each such $i-$colored cycle is a compatible cycle. Let $\mathcal{S}$ be the set of all such compatible $i-$colored cycles for $i=1,2,3$. Now,  give color $i$ to every vertex of $\mathcal{I(S)}$ which corresponds to a compatible $i-$colored cycle in $\mathcal{S}$, $i=1,2,3$. Thus   $\mathcal{I(S)}$ has a proper vertex coloring with color set $\{1,2,3\}$.

By reversing the argument the implication {\bf (ii)}$\rightarrow${\bf (i)} follows easily. The remainder of the proof is easily established.
\end{proof}

 A planarizing perfect pseudo-matching $M$ of the Petersen graph is shown in Figure~\ref{FIG:Petersen}, which in turn served as the basis for constructing  cubic graphs $G$ with a stable dominating cycle $C$ with a planarizing perfect pseudo-matching $M$ having only $2$ components which are $K_{1,3}$. This in turn led to a simple uniquely hamiltonian graph of minimum degree $4$,~\cite{Fleischner2014}.
 We note in passing that any stable dominating cycle in a cubic graph can be used as the basis for constructing a simple uniquely hamiltonian 
 graph of minimum degree $4$.

In view of our remarks preceding Theorem~\ref{Th:3edgecoloring}, we are led to the    following question.

\begin{question}\label{Q:1}
  Given a cubic graph $G_3$ with a stable dominating cycle $C$ and corresponding perfect pseudo-matching $M=E(G_3)\setminus E(C)$. Is it true that $(G_3 / M,\mathcal{T}_M)$ has a compatible cycle decomposition?
\end{question}

\begin{example}
By using computer programming we found a
  snark $G_3$ of order $28$ shown in  Figure~$\ref{FIG:Snark28}$  which has exactly   one stable dominating cycle $C$.
  The perfect pseudo-matching $M=E(G_3)\setminus E(C)$ is  visualized by bold-face edges. $G_3/M$ contains a {\rm SUD}-$K_5-$minor.

  \begin{figure}[ht]

\setlength{\unitlength}{.1cm}
\vspace{1cm}
\begin{center}

%\begin{floatrow} 

\begin{picture}(35,40)

\put(15,-5){\circle*{1.2}}
 
\put(19.5,-4.5){\circle*{1.2}}
\put(10.5,-4.5){\circle*{1.2}} 
 
\put(6,-3.5){\circle*{1.2}}
\put(24,-3.5){\circle*{1.2}} 

\put(0.6,-1.5){\circle*{1.2}}
\put(29.4,-1.5){\circle*{1.2}}

\put(32.5,3){\circle*{1.2}}
\put(-2.5,3){\circle*{1.2}}
 
\put(34.5,7.5){\circle*{1.2}}
\put(-4.5,7.5){\circle*{1.2}}
 
\put(35.5,12.5){\circle*{1.2}}
\put(-5.5,12.55){\circle*{1.2}}

\put(35.5,17.5){\circle*{1.2}}
\put(-5.5,17.55){\circle*{1.2}}

\put(34.5,22.5){\circle*{1.2}}
\put(-4.5,22.5){\circle*{1.2}}

\put(32.5,27){\circle*{1.2}}
\put(-2.5,27){\circle*{1.2}}

\put(29.4,31.5){\circle*{1.2}}
\put(.6,31.5){\circle*{1.2}}

\put(40,30){\circle*{1.2}}
\put(-10,30){\circle*{1.2}}

\put(6,33.5){\circle*{1.2}}
\put(24,33.5){\circle*{1.2}}

 \put(10.5,34.5){\circle*{1.2}}
 \put(19.5,34.5){\circle*{1.2}}

 \put(15,35){\circle*{1.2}}

\thicklines
\linethickness{0.15mm}

%\thicklines
%\linethickness{0.20mm}

\qbezier(.6,31.5)(15,38.25)(29.4,31.5)
\qbezier(.6,-1.5)(15,-8.25)(29.4,-1.5)
\qbezier(.6,31.5)(-12,15)(.6,-1.5)
\qbezier(29.4,-1.5)(42,15)(29.4,31.5)
\thicklines
\linethickness{0.4mm}
\qbezier(15,35)(25,22)(34.5,22.5)
\qbezier(29.4,31.5)(29.4,31.5)(-4.5,22.5)
\qbezier(-5.5,12.5)(-5.5,12.5)(24,-3.5)
\qbezier(-2.5,3)(-2.5,3)(35.5,12.5)
\qbezier(24,33.5)(30,18)(35.5,17.5)
\qbezier(6,33.5)(0,18)(-5.5,17.5)
\qbezier(.6,31.5)(1.5,15)(-4.5,7.5)
\qbezier(15,-5)(15,-5)(34.5,7.5)
\qbezier(6,-3.5)(18,1.5)(29.4,-1.5)
\qbezier(0.6,-1.5)(9,1.5)(19.5,-4.5)
\qbezier(40,30)(33,40)(19.5,34.5)
\qbezier(40,30)(45,15)(32.5,3)
\qbezier(40,30)(40,30)(32.5,27)
\qbezier(-10,30)(0,40)(10.5,34.5)
\qbezier(-10,30)(-10,30)(-2.5,27)
\qbezier(-10,30)(-9,-12)(10.5,-4.5)
\end{picture}

%\end{floatrow}

\end{center}
\vspace{1cm}
\caption{\small\it 
An snark of order $28$ with precisely one stable dominating cycle $C$.}
\label{FIG:Snark28}
\end{figure}
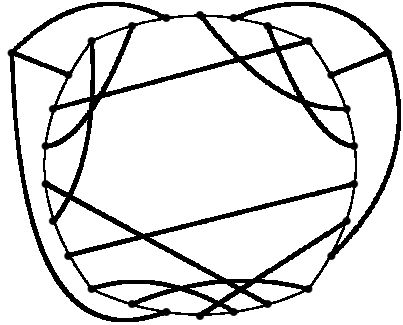

\end{example}

\begin{proposition}\label{Prop:1}
Suppose Question~$\ref{Q:1}$ has a positive answer. Then
 \begin{description}
\item[(a)] the {\rm SC Conjecture} is true;

\item[(b)] the {\rm CDC Conjecture} can be reduced to the {\rm DC Conjecture}.
\end{description} 
\end{proposition}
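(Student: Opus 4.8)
The plan rests on a dictionary, implicit in the introduction, between cubic graphs carrying a dominating cycle and eulerian graphs of maximum degree $\le 6$ carrying an eulerian trail. If $G_3$ is cubic, $C$ a dominating cycle and $M=E(G_3)\setminus E(C)$, then $M$ is a perfect pseudo-matching (its $K_2$-components are the chords of $C$, its $K_{1,3}$-components are centred at the vertices off $C$) and $G:=G_3/M$ is eulerian with $4\le\delta(G)\le\Delta(G)\le 6$, its induced transition system $\mathcal T_M$ being that of an eulerian trail $T_\epsilon$; this is exactly the ``associated'' pair of Proposition~\ref{proposition1} read backwards, and every such $(G,T_\epsilon)$ arises so. Because $C$ is dominating, $G_3\setminus E(M)$ is the \emph{single} cycle $C$, so by the observation preceding Theorem~\ref{Th:3edgecoloring} a compatible cycle decomposition of $(G_3/M,\mathcal T_M)$ lifts to a cycle double cover of $G_3$ containing $C$, which by Proposition~\ref{proposition1} is the same thing as a cycle decomposition of $G$ compatible with $T_\epsilon$. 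A positive answer to Question~\ref{Q:1} supplies all of this whenever $C$ is \emph{stable}.

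For (a): let $G$ be connected eulerian with $\delta(G)>2$ and eulerian trail $T_\epsilon$; by \cite[Lemma~1]{Fleischner1984} we may assume $4\le\delta(G)\le\Delta(G)\le 6$, and we pass to the associated pair $(G_3,C_\epsilon)$, so that it suffices to produce a cycle double cover of $G_3$ through $C_\epsilon$. If $C_\epsilon$ is stable this is Question~\ref{Q:1}. If $C_\epsilon$ is Hamiltonian (equivalently $G$ is $4$-regular), then $M$ is a perfect matching, $C_\epsilon$ is an even cycle, colouring $E(C_\epsilon)$ alternately with $1,2$ and $M$ with $3$ is a proper $3$-edge-colouring, and the three two-colour subgraphs form a cycle double cover with $C_\epsilon$ as a member (cf.\ Theorem~\ref{Th:3edgecoloring}). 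The remaining situation — $C_\epsilon$ dominating, neither stable nor Hamiltonian — is the crux: there is then a dominating cycle $D\ne C_\epsilon$ with $V(C_\epsilon)\subseteq V(D)$, yet one must still get the double cover through $C_\epsilon$ itself. The natural approach is a minimal-counterexample argument: in a smallest counterexample to the SC Conjecture, perform surgery at a vertex of $D$ lying off $C_\epsilon$ — equivalently, split off a transition at the corresponding degree-$6$ vertex of $G$ — so as to obtain a strictly smaller eulerian instance whose compatible cycle decomposition can be re-expanded to one for $(G,T_\epsilon)$; arranging that the re-expansion separates \emph{all} transitions at the split vertex is the delicate point.

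For (b): it suffices to prove the CDC Conjecture for snarks, and we proceed by induction on the number of vertices. Given a snark $G_3$, the DC Conjecture yields a dominating cycle $C$; choose a cycle $C'$ with $V(C)\subseteq V(C')$ and $|V(C')|$ as large as possible — it is again dominating, and $|V(C')|\le|V(G_3)|-1$ since a snark is non-Hamiltonian. If $C'$ is stable, Question~\ref{Q:1} and the lifting observation above give a cycle double cover of $G_3$. If not, some cycle other than $C'$ has vertex set exactly $S:=V(C')$; then $G_3[S]$ is connected, bridgeless (every edge of $G_3[S]$ lies on $C'$, or on a cycle formed by $C'$ together with a chord) and has fewer than $|V(G_3)|$ vertices, so by the inductive hypothesis it carries a cycle double cover, and the remaining task is to extend this over the vertices of $V(G_3)\setminus S$ and the edges incident to them.

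The only genuinely new ingredient is the single appeal to Question~\ref{Q:1} in the stable case; the rest is bookkeeping with the introduction's constructions. The step I expect to be the main obstacle is precisely the non-stable case: for (b), extending the double cover over the off-cycle vertices in the ``two cycles on one vertex set'' configuration, and for (a) — where enlarging the cycle changes it and so is not directly available — producing the double cover through the \emph{prescribed} cycle $C_\epsilon$ via the minimal-counterexample surgery sketched above.
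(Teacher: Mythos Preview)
Your proposal correctly identifies the dictionary between $(G_3,C_\epsilon)$ and $(G,T_\epsilon)$ and correctly disposes of the stable case via Question~\ref{Q:1}. The genuine gap is exactly where you flag it: the non-stable case, in both (a) and (b). You sketch a minimal-counterexample surgery for (a) and an induction on $G_3[S]$ for (b), but neither is carried out, and neither is obviously completable. In particular, for (b) there is no mechanism offered for extending a CDC of $G_3[S]$ across the vertices of $V(G_3)\setminus S$ and the $K_{1,3}$-components they anchor; and for (a) the ``re-expansion separates all transitions'' step is precisely the content of the SC Conjecture at that vertex, so the reduction risks circularity.

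The paper avoids both difficulties with a single iterative peeling argument that you are missing. When $C_\epsilon$ is not stable, take a dominating cycle $C_1$ in $G_3$ with $V(C_\epsilon)\subseteq V(C_1)$. In $G=G_3/M$ the cycle $C_1$ becomes a \emph{spanning trail} $T_1$, and the key observation is that $G\setminus E(T_1)$ is a collection $\mathcal S^{(1)}$ of pairwise disjoint cycles, each compatible with $\mathcal T_\epsilon$. One then passes to the subgraph $G^{(1)}$ induced by $E(T_1)$, suppresses its degree-$2$ vertices to obtain a strictly smaller eulerian pair $(G',T')$ with degrees in $\{4,6\}$, and repeats. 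The process terminates either at a single cycle or at a stable instance (where Question~\ref{Q:1} applies); re-inserting the suppressed vertices, $\mathcal S^{(1)}\cup\mathcal S^{(2)}\cup\cdots$ together with the terminal decomposition is the desired compatible cycle decomposition of $(G,T_\epsilon)$. Part (b) then needs no separate induction: given a snark with a dominating cycle (supplied by the DC Conjecture), contract the complementary perfect pseudo-matching and run the same iteration. Your Hamiltonian sub-case and your $G_3[S]$ induction are both subsumed by this device.
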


\begin{proof}
 Suppose Sabidussi’s Compatibility Conjecture (SC Conjecture) is false. Then it is false even for an eulerian graph $G$ with eulerian trail $T_\epsilon$ and  $4\le \delta(G)\le \Delta(G)\le 6$. Let $G_3$ be the cubic graph and $C_\epsilon$ the dominating cycle in $G_3$ corresponding to $G$ and $T_\epsilon$, respectively, as mentioned in the introduction following the statement of the SC Conjecture. We distinguish between two cases.

\begin{description}
\item[1)] $C_\epsilon$ is a stable cycle. Then $E(G_3) \setminus E(C_\epsilon)$ is a perfect pseudo-matching $M$, and by the supposition of this proposition $G = G_3/M$
% is a planar eulerian graph with non-separating transition system $\mathcal{T}$ corresponding to the adjacencies of the edges in $C_\epsilon$. By~\cite{Fleischner1980}  $G$ 
has a cycle decomposition compatible with $\mathcal{T}_\epsilon$ (corresponding to $C_\epsilon$), contrary to our supposition that SC Conjecture is false for $G$  with eulerian trail $T_\epsilon$.\\

\item[2)] $C_\epsilon$ is not a stable cycle. Then there is a dominating cycle $C_1$ in $G_3$ with $V(C_\epsilon)\subseteq V(C_1)$. Considering $G = G_3/M$ and $T_\epsilon$ as in case 1), it follows
that $C_1$ corresponds to a spanning trail $T_{1}$ in $G$. Moreover,  $G \setminus E(T_{1})$ is a set of totally disjoint compatible cycles $\mathcal{S}^{(1)}$ in $(G,\mathcal{T})$.

Let $G^{(1)}$ be the subgraph of $G$ induced by $E(T_{1})$.
            %   $$G^{(1)} = G\setminus E(S^1)  \mbox{ where } E(S^1)  =  U E(K)  .   K in S^1$$ 
In $G^{(1)}$, $T_1$ is an eulerian trail. Suppressing in $G^{(1)}$ the  vertices of degree $2$ we transform $(G^{(1)},T_1)$ into $(G^{'},T^{'})$ with $G^{'}$ having  vertices of degree $4$ and $6$ only, and $T^{'}$ being an eulerian trail of $G^{'}$. Let $G^{'}_3$ be the cubic graph corresponding to $G^{'}$, and let $C^{'}$ be the dominating cycle of $G^{'}_3$ corresponding to $T^{'}$ and thus to $C_1$. 

\end{description}

Now we consider the two cases 1) and 2) with $(G^{'}_3,C^{'})$ in place of $(G_3,C)$; and so on. Ultimately, for some $j > 0,\ G^{(j)}$ is nothing but a cycle $C^{*}$,
or it has a cycle decomposition $\mathcal{S}^*$ compatible with the eulerian trail $T^{(j)}$ of $G^{(j)}$ because the corresponding dominating cycle $C^{(j)}$ of $G_3^{(j)}$ is stable. 
 Thus,  $\mathcal{S}^{(1)}\cup \ldots \cup \mathcal{S}^{(j-1)}\cup\{C^{*}\}$ or  $\mathcal{S}^{(1)}\cup \ldots \cup \mathcal{S}^{(j-1)}\cup \mathcal{S}^*$, respectively, corresponds to a cycle decomposition  $\mathcal{S}$ of $G$  compatible with $T_\epsilon$ after step-by-step inserting anew the suppressed  vertices of degree $2$. The validity of SC Conjecture for $G$ follows, contrary to the original supposition.\\

Moreover, since it suffices to consider snarks when dealing with the CDC  Conjecture we may first consider the DC  Conjecture. Thus for a given snark $G_3$, if we can find a dominating cycle $C$, then we can construct $(G,T)$ by contracting the perfect pseudo-matching 
$M = G_3 \setminus E(C)$ with $G = G_3/M$ and $T$ corresponding to $C$. Now we argue algorithmically as above to
obtain a cycle decomposition  $\mathcal{S}$  of $G$ compatible with $T$. Clearly, $\mathcal{S}$ corresponds to a
set $\mathcal{S}_3$ of cycles in $G_3$ covering the edges of $M$ twice and the edges of $C$ once.
Thus $\mathcal{S}_3 \cup \{C\}$ is a cycle double cover of $G_3$ containing the dominating cycle 
$C$.   Then the CDC Conjecture is true if the DC Conjecture is true.                                                                                                                                                      
\end{proof}

 However, we also made a computer search establishing the usefulness of perfect pseudo-matchings - we include  Table~\ref{TBL:snarks} produced by the third author  and discuss the advantage of perfect pseudo-matchings vis-a-vis perfect matchings.

 \setlength{\tabcolsep}{2pt}
% \begin{document}
  \begin{table}[tp!]
    \centering
   % \caption{Computation results for instance set S2.}
    \begin{tabular}{rrrrrrrr}
      \toprule[1.5pt]
    \multicolumn{8}{c}{$\text{Snarks of order $\le 32$}$}  \\
      \cmidrule(lr){1-8}
      \footnotesize$n$&\footnotesize$s(n)$&\footnotesize$\overline{sppm}(n)$&\footnotesize$\overline{spppm}(n)$&\footnotesize$\overline{spmK_5}(n)$&\footnotesize$\overline{sppmK_5}(n)$&\footnotesize$\overline{spmSK_5}(n)$&\footnotesize$\overline{sppmSK_5}(n)$  \\
      \midrule
      \footnotesize$\mathbf{10}$& \footnotesize$1$ &\footnotesize$\mathbf{1}$&\footnotesize$0$&\footnotesize$\mathbf{1}$&\footnotesize$0$&\footnotesize$\mathbf{1}$&\footnotesize$0$\\
\footnotesize$\mathbf{18}$&\footnotesize $2$   & \footnotesize$\mathbf{1}$ &\footnotesize$0$&\footnotesize$\mathbf{1}$&\footnotesize$0$&\footnotesize$\mathbf{1}$&\footnotesize$0$\\
\footnotesize$\mathbf{20}$&\footnotesize$6$ & \footnotesize$\mathbf{5}$ &\footnotesize$0$&\footnotesize$\mathbf{5}$&\footnotesize$0$&\footnotesize$\mathbf{4}$&\footnotesize$0$\\
\footnotesize$\mathbf{22}$ &\footnotesize$31$ & \footnotesize$\mathbf{29}$&\footnotesize$0$&\footnotesize$\mathbf{29}$&\footnotesize$0$&\footnotesize$\mathbf{14}$&\footnotesize$0$\\
\footnotesize$\mathbf{24}$& \footnotesize$155$ &\footnotesize $\mathbf{146}$&\footnotesize$0$ &\footnotesize$\mathbf{146}$&\footnotesize$0$ &\footnotesize$\mathbf{97}$&\footnotesize$0$\\
\footnotesize$\mathbf{26}$& \footnotesize$1297$ & \footnotesize$\mathbf{1239}$  &\footnotesize$2$&\footnotesize$\mathbf{1239}$&\footnotesize$0$&\footnotesize$\mathbf{822}$&\footnotesize$0$ \\
\footnotesize$\mathbf{28}$&\footnotesize$12517$&\footnotesize$\mathbf{12102}$&\footnotesize$45$&\footnotesize$\mathbf{12102}$&\footnotesize$15$&\footnotesize$\mathbf{8374}$&\footnotesize$0$\\
\footnotesize$\mathbf{30}$&\footnotesize$139854$&\footnotesize$\mathbf{136850}$&\footnotesize$933$&\footnotesize$\mathbf{136850}$&\footnotesize$578$&\footnotesize$\mathbf{105321}$&\footnotesize$33$\\\footnotesize$\mathbf{32}$&\footnotesize$1764950$&\footnotesize$\mathbf{1740342}$&
\footnotesize$24268$&\footnotesize$\mathbf{1740342}$&\footnotesize$18537$&\footnotesize$\mathbf{1430228}$&\footnotesize$1062$\\
      \bottomrule[1.5pt]
    \end{tabular}
%\vspace{10mm}
\caption{\cite{Klocker} \small\it 
\footnotesize{$s(n)=$ $\#$Snarks of order $n$; $\overline{sppm}(n)=$ $\#$Snarks of order $n$ with no planarizing perfect matching; $\overline{spppm}(n)=$ $\#$Snarks of order $n$ with no planarizing perfect pseudo-matching; $\overline{spmK_5}(n)=$ $\#$Snarks of order $n$ with no $K_5-$minor-free perfect matching; $\overline{sppmK_5}(n)=$ $\#$Snarks of order $n$ with no $K_5-$minor-free perfect pseudo-matching; $\overline{spmSK_5}(n)=$ $\#$Snarks of order $n$ with no $\rm SUD$-$K_5-$minor-free perfect matching; $\overline{sppmSK_5}(n)=$ $\#$Snarks of order $n$ with no $\rm SUD$-$K_5-$minor-free perfect pseudo-matching.}}
\label{TBL:snarks}
  \end{table}

In Table~\ref{TBL:snarks}, compare column $2i$ with column $2i-1$, $i=2,3,4$, in which a small portion  of snarks of order $n$ have some perfect matching with a property $X$ whereas almost all  snarks of order $n$ have some perfect pseudo-matching with the property $X$.
For example, out of the $1297$ snarks of order $26$, $58$ have some planarizing perfect matching while all snarks of order $26$ except two of them (one  is shown in Figure~\ref{FIG:Snark26}) have some planarizing perfect pseudo-matching; and all snarks of order $26$ have some $\rm SUD$-$K_5-$minor-free perfect pseudo-matching while $475$ of them have some $\rm SUD$-$K_5-$minor-free perfect matching.

 Nonetheless, snarks with planarizing perfect pseudo-matchings exist in abundance. To show this we need to define a certain drawing of a cubic graph $G$ with given perfect pseudo-matching $M$. 

\begin{lemma}\label{LEM:drawing}
Let a simple cubic graph $G$ with perfect pseudo-matching $M$ be given. Then we can draw $G$ in the plane in such a way that the crossings of the drawing involve only edges of $G \setminus E(M)$. Moreover, no three edges of $G$ cross each other in the same point of the plane.
\end{lemma}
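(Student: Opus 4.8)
The plan is to use the fact that every component of $M$ is a tree on at most $4$ vertices, which can therefore be drawn with its vertices on a line and its edges as pairwise non-crossing arcs in one half-plane, whereas the cycles forming $G_0:=G\setminus E(M)$ may cross each other freely. Concretely, I would fix a horizontal line $L$ and place all of $V(G)$ on $L$, arranging the vertices so that the vertex set of each component $H$ of $M$ occupies a consecutive interval of $L$; inside the interval of a component isomorphic to $K_{1,3}$ I would list the centre before its three leaves. Then I would draw the edges of $M$ as circular arcs lying strictly above $L$: a single small arc for each $K_2$, and for each $K_{1,3}$ three nested semicircles of increasing radius emanating from the centre. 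Since nested semicircles sharing an endpoint meet only at that endpoint, and the components occupy disjoint intervals, this gives a crossing-free drawing of $M$ (a $1$-page book embedding); in particular no edge of $M$ meets another edge of $M$ except at common endpoints on $L$.

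Next I would route every edge of $G_0$ as the lower semicircle on the segment joining its two endpoints, so that all $G_0$-edges lie strictly below $L$. Because crossings are permitted, this is unproblematic: two such lower semicircles meet in at most one interior point, transversally, and this happens exactly when the corresponding pairs of endpoints interleave along $L$. Since the $M$-edges lie strictly above $L$ and the $G_0$-edges strictly below $L$, the two edge sets are disjoint apart from shared endpoints on $L$; hence every crossing of the resulting drawing of $G$ lies below $L$ and involves precisely two edges of $G_0$. One also checks that no conflict arises at the vertices on $L$: the centre of every $K_{1,3}$-component has degree $0$ in $G_0$ and contributes only its three (upward) $M$-edges, while every other vertex sends its unique $M$-edge upward and its two $G_0$-edges downward.

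It remains to arrange that no three edges of $G$ pass through a common point. For this I would invoke a small perturbation: the drawing has only finitely many crossings, all transversal intersections of pairs of lower semicircles, and the coincidence of three arcs at one point is a non-generic event that is destroyed by an arbitrarily small perturbation of the positions of the vertices on $L$ (equivalently, of the radii of the arcs). I do not anticipate a genuine obstacle here; the only point requiring (routine) verification is the first step, that each component of $M$ admits a crossing-free arc drawing above $L$ in the chosen vertex order, which is immediate for $K_2$ and is realised for $K_{1,3}$ by the nested semicircles described above. The essential content of the lemma is simply that a perfect pseudo-matching is a forest and can therefore be ``parked'' on one side of a line, leaving the other side entirely available for the cycles of $G_0$.
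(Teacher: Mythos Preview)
Your argument is correct, and it takes a genuinely different route from the paper's proof. The paper argues purely topologically: since $M$ is a forest, it can be drawn without closed curves, the complement $\mathbb{R}^2\setminus M$ is then a connected open set, and one routes the edges of $G\setminus E(M)$ one at a time through this complement, at each step also avoiding the finitely many crossing points already created. Your proof instead constructs an explicit two-page book drawing: vertices on a spine $L$, the components of $M$ occupying disjoint intervals and drawn as nested arcs above $L$, and all edges of $G_0$ drawn as semicircles below $L$.

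Both approaches are short, but they buy slightly different things. The paper's argument is agnostic about the shape of the components of $M$ (any forest would do) and makes the step-by-step avoidance of triple points part of the construction rather than a perturbation afterthought; it also immediately yields the extra property that any two edges meet in at most one point. Your construction is more concrete and checkable, and in fact it gives that last property for free as well, since two lower semicircles with endpoints on $L$ intersect in at most one interior point. Your observation that the centre of each $K_{1,3}$ has no incident $G_0$-edges is exactly what makes the half-plane separation clean at the vertices. The only place one might tighten the write-up is the perturbation step: rather than perturbing vertex positions (which could disturb the nesting above $L$), it is cleaner to perturb only the lower arcs slightly, or simply to choose the vertex abscissae generically from the start so that no three interleaving chords share a common intersection point.
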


\begin{proof}
The validity of the lemma rests on the following facts.

\begin{description}

\item[(i)] One can draw $M$ in the Euclidean plane $\mathbb{R}^2$ such that any two points of different components of $M$ are of distance $>1$ apart, say, and such that the drawing of $M$ does not contain any closed curve;

\item[(ii)] viewing $M$ as a point set, the set $\mathbb{R}^2\setminus M$  is connected;

\item[(iii)] every edge $e = xy \in E(G) \setminus E(M)$ can be drawn as a smooth curve in $\mathbb{R}^2$ such that all of $e$ except $x, y$ lie in $\mathbb{R}^2 \setminus M$;

\item[(iv)] a desired drawing of $G$ in $\mathbb{R}^2\setminus M$ can be achieved in a step-by-step manner. Namely, if we have already drawn a subgraph $G^\circ$ of $G$ with $G^\circ$ containing $M$ such that no three edges cross each other in the same point, and if $X^\circ$ is the set of crossing points of pairs of edges of $G^\circ$, then $\mathbb{R}^2\setminus (E(M) \cup X^\circ)$ is still a connected point set (and so the next edge can be drawn without passing through an element of $X^\circ$).

In addition we may achieve the property that

\item[(v)] no pair of edges in the drawing of $E(G)$ has more than one common point.  
\end{description}
\vspace{-1cm}
\end{proof}

  We call  a drawing having the property stated in Lemma~\ref{LEM:drawing}, a {\sf drawing with  $M$-avoiding intersections}. Note that if such a drawing exists, then there exists even one where adjacent edges do not intersect in $\mathbb{R}^2\setminus M$ (see (v) above).

Thus we may relate drawings with $M$-avoiding intersections to planarizing perfect pseudo-matchings $M$ as follows.

\begin{theorem}\label{Th:drawing}
Let $G$ be a simple cubic graph, and let $M$  be a perfect pseudo-matching. $M$ is planarizing if and only if there exists a drawing of $G$ with $M$-avoiding intersections such that if $e,f \in E(G) \setminus E(M)$ intersect, then  $e$ and $f$ are incident to different vertices of a component of  $M$.
\end{theorem}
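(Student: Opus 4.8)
The plan is to prove both implications by relating the planarity of $G/M$ directly to the combinatorics of how non-matching edges cross in a drawing with $M$-avoiding intersections. The key observation is that contracting a component $H$ of $M$ (a $K_2$ or a $K_{1,3}$) to a single vertex $v_H$ produces, in $G/M$, a vertex $v_H$ whose incident edges are exactly the edges of $E(G)\setminus E(M)$ that were incident to $V(H)$ in $G$; for a $K_2$-component there are two such edges at each endpoint, giving a degree-$4$ vertex (or fewer after identifications), and for a $K_{1,3}$-component there are three leaves each contributing one such edge, giving a degree-$3$ vertex. A drawing with $M$-avoiding intersections in which no two crossing non-matching edges are incident to the \emph{same} vertex of a component of $M$ can be turned into a plane drawing of $G/M$: shrink each component $H$ of $M$ to a small disk $D_H$ around a point $v_H$, reroute the initial segments of the (few) edges leaving $V(H)$ so that they emanate from $v_H$ inside $D_H$ without new crossings, and observe that any crossing surviving outside the disks is a crossing of two non-matching edges incident to \emph{different} components, hence still a genuine crossing of $G/M$; but if the drawing is chosen so that moreover crossings only occur between edges incident to different vertices of a single component (the hypothesis), one checks these can be eliminated by a local swap inside $D_H$. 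Thus from a drawing satisfying the stated condition we extract a planar embedding of $G/M$, proving the ``if'' direction.

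For the converse, suppose $M$ is planarizing, so $G/M$ has a plane embedding $\Pi$. Starting from $\Pi$, I would reconstruct a drawing of $G$ by, for each component $H$ of $M$, replacing the vertex $v_H$ of $G/M$ by a small plane drawing of $H$ inside a disk $D_H$ and re-attaching the edge-stubs. For a $K_2$-component with endpoints $x,y$ of degrees contributing two stubs each, the cyclic order of the (up to) four stubs around $v_H$ in $\Pi$ must be split into the two stubs at $x$ and the two at $y$; if the cyclic order is ``$x$-stubs consecutive, $y$-stubs consecutive'' then the edge $xy$ can be drawn crossing-free, but if the order alternates $x,y,x,y$ then the edge $xy$ must cross one pair of the stubs — and precisely in that case the two crossing edges are incident to the two different vertices $x,y$ of $H$, as required. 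The analogous analysis for a $K_{1,3}$-component (center $c$, leaves $x_1,x_2,x_3$, one stub at each leaf, so degree $3$ at $v_H$) shows the three claws $x_ic$ can always be inserted inside $D_H$ without any crossing at all, since three stubs around a point in the plane can always be connected to a small $K_{1,3}$ planarly. Then Lemma~\ref{LEM:drawing} guarantees we may assume no three edges cross at a point and no two edges cross twice, and the only crossings introduced are the $K_2$-alternation crossings just described, all of which satisfy the ``different vertices of a component'' condition; so the resulting drawing of $G$ has $M$-avoiding intersections of the required type.

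The main obstacle I anticipate is the careful bookkeeping of vertex degrees and stub counts in $G/M$ when identifications occur. Because $G$ is cubic but need not have girth $>2$ after contraction, contracting a $K_2$-component can create a vertex of degree less than $4$ (e.g. if two of its incident non-matching edges had the same other endpoint, or formed a loop), and one must check that the alternation/non-alternation dichotomy and the local rerouting still go through in these degenerate cases — in particular that a loop or a multi-edge at $v_H$ does not force an unavoidable crossing of edges incident to the \emph{same} vertex of $H$. Handling these degeneracies (and verifying that the simplicity hypothesis on $G$, together with the freedom in Lemma~\ref{LEM:drawing}, is enough to rule out the bad configurations) is where the real work lies; the topological core — trading a planar embedding of $G/M$ for a near-planar drawing of $G$ and back — is routine once the disk-replacement picture is set up.
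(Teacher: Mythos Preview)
Your overall strategy matches the paper's: both directions use the blow-up/contract picture that trades a plane embedding of $G/M$ for a drawing of $G$ whose crossings are ``local'' to a single component of $M$, and vice versa. The paper's argument is in fact terser than yours and glosses over the same topological step (that crossings between edges sharing the endpoint $v_H$ in $G/M$ can be removed) that your ``local swap inside $D_H$'' tries to address; neither you nor the paper makes this step fully rigorous, though it is true.

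There is, however, a concrete error in your bookkeeping. In a cubic graph $G$, the center $c$ of a $K_{1,3}$-component of $M$ has all three of its edges in $M$, so $c$ contributes \emph{no} non-$M$ stubs; each leaf $x_i$ has one $M$-edge (namely $x_ic$) and therefore \emph{two} non-$M$ edges. Hence a $K_{1,3}$-component yields a vertex of degree $6$ in $G/M$, not degree $3$ as you claim twice. Consequently your assertion in the ``only if'' direction that ``the three claws $x_ic$ can always be inserted inside $D_H$ without any crossing at all, since three stubs around a point can always be connected to a small $K_{1,3}$ planarly'' is based on a false premise: with six stubs in an arbitrary cyclic order around $v_H$, grouped into three pairs (one pair per leaf), the pairs may interleave and force crossings when you expand $v_H$ back to a $K_{1,3}$. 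The fix is straightforward --- draw the claw as a tiny $Y$ and run two arcs outward from each leaf to its two stubs; the only crossings this produces are between stubs attached to \emph{different} leaves, which is exactly what the theorem permits --- so your conclusion survives, but the argument as written does not. Also note that your ``local swap inside $D_H$'' in the ``if'' direction is misplaced: the offending crossing of $e$ and $f$ need not lie inside $D_H$ at all, so a genuinely local modification at $v_H$ will not remove it; you need the global swap of the two arcs from $v_H$ out to the crossing point (or an appeal to the Hanani--Tutte theorem, since in the contracted drawing every pair of independent edges crosses zero times).
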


\begin{proof}
Let $M$  be a planarizing perfect pseudo-matching of a cubic graph $G$. So there is a drawing of $G$ with  $M$-avoiding intersections
by Lemma~\ref{LEM:drawing} such that 
$G/M$ is planar. Such drawing of $G$ can be obtained by starting from a plane embedding of $G/M$ and by replacing the vertices 
of $G/M$ by the components of $M$. Thus $e$ and $f$ are adjacent to different vertices of a component of  $M$, for every edge-crossing
involving $\{e,f\}\subset E(G) \setminus E(M)$ in this drawing of $G$; otherwise, the edges  corresponding  to $e$ and $f$ in
$G/M$  intersect in said drawing.

Suppose conversely that $G$ has a drawing with $M$-avoiding intersections such that if $e,f \in E(G) \setminus E(M)$ intersect, then  $e$ and $f$ are adjacent to different vertices of a component of  $M$, but suppose that $G/M$ is not planar. So there exist some edge-crossing involving
$\{e^{'},f^{'}\}$ in any embedding of $G/M$. Thus, $e^{'}$ and $f^{'}$ can be assumed to be adjacent to four different vertices in $V(G/M)$. Since every vertex of $G/M$
corresponds to a component of $M$, two edges $e,f \in E(G) \setminus E(M)$ corresponding to $e^{'}$ and $f^{'}$ intersect but 
they are not adjacent to different vertices of a component of  $M$, which is a contradiction.
\end{proof}

\begin{lemma}\label{LEM:G*}
Let $G$ be a $2-$connected simple cubic graph having an embedding in the plane with precisely one edge-crossing $\{xx^{'''},x^{'}x^{''}\}\subset E(G)$. Define
                        $G_x^* := (G \setminus \{xx^{'''},x^{'}x^{''}\}) \cup B_0$
obtained by connecting $x$ with $a$ and $x^{'''}$ with $a^{'}$, $x^{'}$ with $b$ and $x^{''}$ with $b^{'}$, and where $B_0$ is the first Blanu\v{s}a block and $a,a^{'},b,b^{'}$ are half-edges of $B_0$ $($see Figures~$\ref{FIG:BlanusaBlocks}\ and~\ref{FIG:edge-crossing})$.  Also let $M$ 
be a  perfect pseudo-matching of $G \setminus \{xx^{'''},x^{'}x^{''}\}$. Then the following  is  true.
\begin{description}
\item[(i)]  $G_x^*$ is $3-$edge-colorable if and only if $G$ is $3-$edge-colorable. 
\item[(ii)] $G_x^*$ has a spanning subgraph homeomorphic to $G$.

\item[(iii)] $G_x^*$ has a planarizing perfect pseudo-matching $M_x^*$ with $M \subset M_x^*$.
\item[(iv)] Every {\rm CDC} of $G$ can be extended to a {\rm CDC} of $G_x^*$.
\end{description}  
 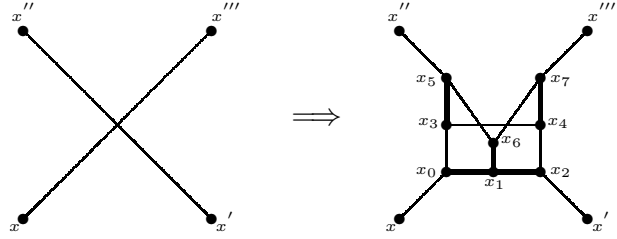
\begin{figure}[ht]

\setlength{\unitlength}{0.125cm}
\vspace{1cm}
\begin{center}

%\begin{floatrow} 

\begin{picture}(-15,20)
%\put(2,5){$B_1^{1}$}
\put(10,2){\circle*{1.2}}
\put(6.75,1.75){\tiny$x_0$}
\put(10,7){\circle*{1.2}}
\put(7,7){\tiny$x_3$}
\put(10,12){\circle*{1.2}}
\put(6.75,11.5){\tiny$x_5$}
\put(15,2){\circle*{1.2}}
\put(14,0.5){\tiny$x_1$}
\put(15,5){\circle*{1.2}}
\put(15.8,5){\tiny$x_6$}
\put(20,2){\circle*{1.2}}
\put(21,1.75){\tiny$x_2$}
\put(20,7){\circle*{1.2}}
\put(20.75,7){\tiny$x_4$}
\put(20,12){\circle*{1.2}}
\put(21,11.5){\tiny$x_7$}

\put(25,17){\circle*{1.2}}
\put(25,18){\tiny$x^{'''}$}
\put(5,17){\circle*{1.2}}
\put(3.75,18){\tiny$x^{''}$}
\put(25,-3){\circle*{1.2}}
\put(25.5,-4.3){\tiny$x^{'}$}
\put(5,-3){\circle*{1.2}}
\put(3.5,-4.3){\tiny$x$}

\put(-15,17){\circle*{1.2}}
\put(-15,18){\tiny$x^{'''}$}
\put(-35,17){\circle*{1.2}}
\put(-36.25,18){\tiny$x^{''}$}
\put(-15,-3){\circle*{1.2}}
\put(-14.5,-4.3){\tiny$x^{'}$}
\put(-35,-3){\circle*{1.2}}
\put(-36.5,-4.3){\tiny$x$}

\put(-6.5,7){$\Longrightarrow$}

\thicklines
\linethickness{0.15mm}

\qbezier(-15,-3)(-15,-3)(-35,17)
\qbezier(-35,-3)(-35,-3)(-15,17)
%\thicklines
%\linethickness{0.20mm}
\qbezier(10,2)(10,2)(20,2)
\qbezier(10,7)(10,7)(20,7)
\qbezier(10,12)(10,12)(15,5)
\qbezier(20,12)(20,12)(15,5)
\qbezier(10,2)(10,7)(10,12)
\qbezier(20,2)(20,7)(20,12)

\qbezier(5,-3)(5,-3)(10,2)
\qbezier(5,17)(5,17)(10,12)
\qbezier(25,17)(25,17)(20,12)
\qbezier(25,-3)(25,-3)(20,2)

\thicklines
\linethickness{0.6mm}
\qbezier(15,2)(15,5)(15,5)
\qbezier(10,2)(10,2)(20,2)
\qbezier(10,7)(10,7)(10,12)
\qbezier(20,7)(20,7)(20,12)

\end{picture}

%\end{floatrow}

\end{center}
\vspace{2mm}
\caption{\small\it 
The conversion of an edge-crossing to  a planarizing part.}
\label{FIG:edge-crossing}
\end{figure}

\end{lemma}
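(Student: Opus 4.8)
The plan is to prove the four assertions almost independently, with a common tool: a description of the $3$-edge-colourings of the block $B_0$. Write $u_0,\dots ,u_7$ for the vertices of $B_0$ as drawn in Figure~\ref{FIG:BlanusaBlocks}, so that the half-edges $a,a',b,b'$ are incident with $u_0,u_7,u_5,u_2$; after the construction $a,a',b,b'$ have become the edges $xu_0,\,u_7x''',\,x'u_5,\,u_2x''$. First I would establish the \emph{gadget colouring lemma}: a colouring $c\colon\{a,a',b,b'\}\to\{1,2,3\}$ extends to a proper $3$-edge-colouring of $B_0$ if and only if $c(a)=c(a')$ and $c(b)=c(b')$. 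Since $\{a,a',b,b'\}$ is an edge-cut of $B_0$, the Parity Lemma forces the pattern of $c$ to be monochromatic or of type $2{+}2$; one exhibits extensions for the monochromatic pattern and for the type $c(a)=c(a')\neq c(b)=c(b')$, and observes that in each remaining $2{+}2$ pattern the colour of $u_3u_4$ is forced, which then produces a conflict at $u_4$. Thus $B_0$ behaves colour-theoretically like the pair of parallel edges $aa'$ and $bb'$, which yields (i): a proper colouring of $G$ colours $xx'''$, $x'x''$ by colours $\alpha,\beta$, restricts to $G\setminus\{xx''',x'x''\}$, and is completed over $B_0$ by the lemma (with $c(a)=c(a')=\alpha$, $c(b)=c(b')=\beta$); conversely a proper colouring of $G_x^*$ restricts to $B_0$, forces $c(a)=c(a')=:\alpha$ and $c(b)=c(b')=:\beta$ there, and hence recolours $G$ by returning $\alpha$ to $xx'''$ and $\beta$ to $x'x''$ (properness at $x,x',x'',x'''$ being inherited).

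For (ii), the paths $P_1=u_0u_3u_4u_7$ and $P_2=u_5u_6u_1u_2$ are vertex-disjoint in $B_0$ with $V(P_1)\cup V(P_2)=V(B_0)$, so $S:=(G\setminus\{xx''',x'x''\})\cup\{xu_0,u_7x''',x'u_5,u_2x''\}\cup E(P_1)\cup E(P_2)$ is $G$ with $xx'''$ subdivided into $x,u_0,u_3,u_4,u_7,x'''$ and $x'x''$ into $x',u_5,u_6,u_1,u_2,x''$; hence $S$ is homeomorphic to $G$ and spans $G_x^*$. For (iii) the crucial point is the pseudo-matching inside $B_0$. A perfect matching $N$ is useless, because then $B_0/N\cong K_4$ with one half-edge at each of its four vertices, and $K_4$ is not outerplanar, so the four half-edges cannot be brought to a common face. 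Instead take $N$ to be the claw $u_0u_1,u_1u_2,u_1u_6$ together with the edges $u_3u_5$ and $u_4u_7$ — a perfect pseudo-matching of $B_0$, namely the bold edges inside the block in Figure~\ref{FIG:edge-crossing} — and put $M_x^*:=M\cup N\supsetneq M$. Contracting $N$ collapses $B_0$ to the $3$-vertex multigraph on $P=\{u_0,u_1,u_2,u_6\}$, $Q=\{u_3,u_5\}$, $R=\{u_4,u_7\}$ with $PQ$ and $PR$ doubled and $QR$ simple, the half-edges leaving at $P$ (twice), $Q$ and $R$. Now $G\setminus\{xx''',x'x''\}$ inherits a crossing-free drawing from the one-crossing drawing of $G$, hence is planar, so $(G\setminus\{xx''',x'x''\})/M$ is planar; inserting, before contracting $M$, a new vertex joined to $x,x',x'',x'''$ in the old crossing region shows that $(G\setminus\{xx''',x'x''\})/M$ has a plane embedding with a face meeting $\bar x,\bar{x'},\bar{x'''},\bar{x''}$ in this cyclic order. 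One checks directly that the multigraph above embeds in that face with outer boundary a triangle on $P,Q,R$ and the pendant edges attached to $\bar x,\bar{x''}$ at $P$ and to $\bar{x'},\bar{x'''}$ at $Q,R$, giving attachment order $\bar{x''},\bar x,\bar{x'},\bar{x'''}$, a rotation of $\bar x,\bar{x'},\bar{x'''},\bar{x''}$. Hence $G_x^*/M_x^*$ is planar, i.e.\ $M_x^*$ is planarizing. (Equivalently: the drawing of $G_x^*$ read off from Figure~\ref{FIG:edge-crossing} has only the crossings $\{u_5u_6,u_3u_4\}$ and $\{u_6u_7,u_3u_4\}$, each joining the two vertices of a $K_2$-component of $M_x^*$, and Theorem~\ref{Th:drawing} applies.)

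For (iv) I would reroute a cycle double cover $\mathcal C$ of $G$ through $B_0$. Let $\alpha=u_0u_3u_4u_7$, $\alpha'=u_0u_1u_6u_7$ (paths $u_0$–$u_7$), $\beta=u_5u_6u_1u_2$, $\beta'=u_5u_3u_4u_2$ (paths $u_5$–$u_2$) and $\gamma=u_0u_1u_2u_4u_7u_6u_5u_3u_0$; a direct check shows $\alpha,\alpha',\beta,\beta',\gamma$ cover every internal edge of $B_0$ exactly twice, $\alpha\cap\beta=\emptyset$, $\alpha'\cap\beta'=\emptyset$, and $u_0,u_7,u_5,u_2$ are each an endpoint of exactly two of $\alpha,\alpha',\beta,\beta'$. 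Since $xx'''$ lies in exactly two members $C_1\neq C_2$ of $\mathcal C$ and $x'x''$ in exactly two $D_1\neq D_2$, replace in $C_1$ (resp.\ $C_2$) the edge $xx'''$ by $xu_0+\alpha+u_7x'''$ (resp.\ $+\alpha'$), and in $D_1$ (resp.\ $D_2$) the edge $x'x''$ by $x'u_5+\beta+u_2x''$ (resp.\ $+\beta'$), arranging that any member lying in both $\{C_1,C_2\}$ and $\{D_1,D_2\}$ receives a vertex-disjoint pair from $\{\alpha,\beta\}$ or $\{\alpha',\beta'\}$, so that the rerouted member is again a cycle; adjoining $\gamma$ yields a cycle double cover of $G_x^*$ (each connecting edge and each internal edge of $B_0$ is covered twice, and the edges of $G\setminus\{xx''',x'x''\}$ keep their $\mathcal C$-coverage). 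The part I expect to be the main obstacle is (iii): one must notice that a matching-based $M_x^*$ cannot work while the claw-based one does, and then verify that the cyclic order in which $B_0/N$ exposes its half-edges on a face is consistent with the cyclic order $x,x',x''',x''$ of the four ends at the erased crossing — a short but genuine computation; making the gadget colouring lemma behind (i) precise is the other point requiring care, while (ii) and (iv) are then direct.
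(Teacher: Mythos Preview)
Your proof is correct and follows essentially the same route as the paper. In particular you use the identical gadget colouring property for (i), the same pair of vertex-disjoint spanning paths $u_0u_3u_4u_7$ and $u_5u_6u_1u_2$ for (ii), the same claw-plus-two-edges pseudo-matching $N=\{u_0u_1,u_1u_2,u_1u_6,u_3u_5,u_4u_7\}$ for (iii), and the same five pieces $\alpha,\alpha',\beta,\beta',\gamma$ for the rerouting in (iv). The paper dismisses (iii) with ``Obviously'' and a reference to the figure; your two justifications (the explicit analysis of $B_0/N$ with the face--order check, and the appeal to Theorem~\ref{Th:drawing} via the two crossings $\{u_5u_6,u_3u_4\}$ and $\{u_6u_7,u_3u_4\}$) supply what the paper omits. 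One cosmetic point: your attachments $x'\!\to u_5$, $x''\!\to u_2$ follow the lemma's wording, whereas Figure~\ref{FIG:edge-crossing} actually draws $x'\!\to x_2$, $x''\!\to x_5$; this is an internal inconsistency of the paper, not an error on your part, and it is immaterial to the argument.
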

\begin{proof}
 Suppose $xx^{'''},x^{'}x^{''}\in E(G)$ are involved in the only edge-crossing in the given embedding of $G$, and suppose $M$ is a  perfect pseudo-matching of $G\setminus \{xx^{'''},x^{'}x^{''}\}$. 
Let $G_x^*$ be the graph, obtained from $G$ as described in the statement of the lemma and Figure~\ref{FIG:edge-crossing}.
\begin{description}
\item[(i)]
It is easy to check that the first Blanu\v{s}a block $B_0$ has precisely two types of proper $3-$edge-colorings:
 in one type  all half-edges $a,a^{'},b,b^{'}$ have the same color and in the second type half-edges $a$ and $a^{'}$ have the same color and 
 half-edges $b$ and $b^{'}$ also have the same color  different from the color of $a$ and $a^{'}$. 
 Thus, if $G_x^*$ is $3-$edge-colorable, then $G$ is $3-$edge-colorable; and a $3-$edge-coloring of $G$
can be easily extended to a $3-$edge-coloring of $G_x^*$. Therefore, we can conclude that {\bf (i)} is true.
 
 \item[(ii)] Clearly, $ (G\setminus \{xx^{'''},x^{'}x^{''}\}) \cup \{xx_0x_3x_4x_7x^{'''},x^{'}x_2x_1x_6x_5x^{''}\}$
  is  a spanning subgraph of $G_x^*$ homeomorphic to $G$.
  
  \item[(iii)] Let     
$M_x^*=M\cup E(H)\cup \{x_3x_5,x_4x_7\} $ 
 where $H$ is a copy of $K_{1,3}$ induced by $\{x_0,x_1,x_2,x_6\}$.
Obviously, $M_x^*$ is a planarizing perfect pseudo-matching of $G_x^*$ containing $M$ (see Figure~\ref{FIG:edge-crossing}). 

\item[(iv)]
Let $\mathcal{C}$ be a CDC of $G$ and let $C_1,C_2,C_3$, and $C_4$ be some cycles in $\mathcal{C}$ 
such that $xx^{'''}\in E(C_1)\cap E(C_2)$, and $x^{'}x^{''}\in E(C_3)\cap E(C_4)$.

 Put $P_1=xx_0x_3x_4x_7x^{'''}$,
$P_2=xx_0x_1x_6x_7x^{'''}$, $P_3=x^{'}x_2x_1x_6x_5x^{''}$, and $P_4=x^{'}x_2x_4x_3x_5x^{''}$. Now set
$C_1^{'}=C_1\setminus \{xx^{'''}\}\cup P_1$, $C_2^{'}=C_2\setminus \{xx^{'''}\}\cup P_2$, 
$C_3^{'}=C_3\setminus \{x^{'}x^{''}\}\cup P_3$,
$C_4^{'}=C_4\setminus \{x^{'}x^{''}\}\cup P_4$, and $C^{'}=x_0x_1x_2x_4x_7x_6x_5x_3x_0$.

Note that if some cycle in $\mathcal{C}$ covers both edges $xx^{'''}$ and $x^{'}x^{''}$, then suppose that $C_1=C_3$
and that likewise $C_2=C_4$ if $\mathcal{C}$ contains two cycles traversing $xx^{'''}$ and $x^{'}x^{''}$ each.
In this case, put $C_1^{'}=C_3^{'}=C_1\setminus \{xx^{'''},x^{'}x^{''}\}\cup P_1\cup P_3$ if $C_1=C_3$ and set $C_2^{'}=C_4^{'}=C_2\setminus \{xx^{'''},x^{'}x^{''}\}\cup P_2\cup P_4$ if $C_2=C_4$.

It follows that 
$$\mathcal{C}_x^*=( \mathcal{C}\setminus \{  C_1,C_2,C_3,C_4\} )  \cup   \{ C_1^{'},C_2^{'},C_3^{'},C_4^{'}, C^{'}\}  $$

 is a CDC of $G_x^*$.
\end{description}
\vspace{-.8cm}
\end{proof}

With the help of Theorem~\ref{Th:drawing} and Lemma~\ref{LEM:G*}, we can prove the following.
\begin{theorem}\label{THM:G*}
Given a cyclically $4-$edge-connected cubic graph $G$ with a  perfect pseudo-matching $M$. Then there exists a cubic graph $G^{*}$ with the following properties.

\begin{description}

\item[(i)] $G^*$ is a snark if and only if $G$ is a snark.

\item[(ii)] $G^*$ has a spanning subgraph homeomorphic to $G$.

\item[(iii)] $G^*$ has a planarizing perfect pseudo-matching $M^*$ with $M \subset M^*$; and moreover,
$G^*$ admits a  {\rm CDC}  containing the cycles of $G^*\setminus E(M^*)$.

\item[(iv)] Every {\rm CDC} of $G$ can be extended to a {\rm CDC} of $G^*$, but the converse is, unfortunately, not true. 

\end{description}
\end{theorem}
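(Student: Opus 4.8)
The plan is to read off a drawing of $G$ with $M$-avoiding intersections from Lemma~\ref{LEM:drawing} — by property (v) of that lemma we may assume that no two adjacent edges cross and, by the lemma itself, that no three edges meet in a common point — and then to replace each of its finitely many crossings by a copy of the first Blanu\v{s}a block $B_0$ exactly as in Lemma~\ref{LEM:G*}. List the crossings as $c_1,\dots,c_k$, with $c_j$ formed by two non-adjacent edges $e_j,f_j\in E(G)\setminus E(M)$, and apply the block substitution of Lemma~\ref{LEM:G*} at $c_j$ one crossing at a time, obtaining $G_j$ from $G_{j-1}$; an edge lying in several crossings simply becomes a path threading the corresponding blocks, which is harmless. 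Put $G^*:=G_k$, and let $M^*$ consist of $M$ together with, for each block $B_0^{(j)}$, its claw $K_{1,3}^{(j)}$ and the two edges forming the remaining $K_2$-pieces of Lemma~\ref{LEM:G*}(iii); then $M\subset M^*$ and $M^*$ is a perfect pseudo-matching of $G^*$.

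Properties (i) and (ii), and the forward half of (iv), then follow by induction on $j$ from parts (i), (ii), (iv) of Lemma~\ref{LEM:G*}, whose proofs are local to the crossing being resolved: routing each $e_j$ (resp.\ $f_j$) along the five-edge path through $B_0^{(j)}$ of Lemma~\ref{LEM:G*}(ii) uses all eight vertices of the block, so the resulting homeomorphic copy of $G$ is \emph{spanning}; a proper $3$-edge-colouring extends over a block and conversely, by the two-types-of-colouring observation in Lemma~\ref{LEM:G*}(i); and a CDC of $G$ is pushed through the blocks one at a time by Lemma~\ref{LEM:G*}(iv). For the snark clause of (i) one checks in addition that the only cyclic edge-cuts of size $\le 4$ created by the construction are the $4$-edge-cuts isolating the individual blocks (harmless, as each block contains a cycle), whereas a cyclic edge-cut of $G^*$ of size $\le 3$ would induce one of $G$; hence $G^*$ is cyclically $4$-edge-connected iff $G$ is, which together with the colourability equivalence gives (i).

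The core of the argument is (iii). By Theorem~\ref{Th:drawing} it suffices to produce a drawing of $G^*$ with $M^*$-avoiding intersections in which every crossing joins two edges incident to \emph{different} vertices of one component of $M^*$, and such a drawing comes from the $M$-avoiding drawing of $G$: carry out the substitutions inside pairwise disjoint small discs $D_1,\dots,D_k$ around the crossing points, leaving everything outside the discs untouched, so that $e_j$ and $f_j$ outside $D_j$ become the connecting edges and their continuations. The computation behind Lemma~\ref{LEM:G*}(iii) — that contracting the $M$-pieces of $B_0^{(j)}$ turns the block into a planar multigraph on three vertices — shows (via Theorem~\ref{Th:drawing} in the one-crossing setting of Lemma~\ref{LEM:G*}) that $B_0^{(j)}$ can be drawn inside $D_j$ with its four half-edges meeting $\partial D_j$ in the cyclic order dictated by $c_j$ and with a single residual self-crossing joining two edges incident to two different vertices of a component of $M^*$. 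Thus each substitution erases the old crossing $c_j$ (whose two edges need not share a component of $M$) and contributes exactly one crossing of the permitted kind without disturbing anything outside $D_j$; after all $k$ substitutions the drawing of $G^*$ has only permitted crossings, so $M^*$ is planarizing. Since $M^*$ is planarizing, $G^*/M^*$ is planar and eulerian with $\delta\ge 4$, the induced transition system $\mathcal{T}_{M^*}$ is non-separating, and Fleischner's compatible-cycle-decomposition theorem for planar eulerian graphs yields a CCD of $(G^*/M^*,\mathcal{T}_{M^*})$; this lifts to a CDC of $G^*$ containing every cycle of $G^*\setminus E(M^*)$, completing (iii).

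For the last assertion of (iv) — that the converse fails — I would give an explicit counterexample rather than a general argument; indeed no universal converse is possible, because every bridgeless cubic graph has a perfect matching, hence a perfect pseudo-matching, so a converse valid for all $G$, combined with (iii), would reduce the CDC Conjecture to the planar case. Concretely one takes a small snark $G$ with a known cycle double cover and a perfect matching $M$, forms $G^*$, and exhibits a cycle double cover of some block $B_0^{(j)}$ that pairs the block's four attachment edges so that collapsing the block recovers a cycle double cover not of $G$ but of the graph obtained by uncrossing $c_j$ the other way. I expect the main obstacle to be (iii): upgrading the single-crossing planarization of Lemma~\ref{LEM:G*}(iii) to a global statement, that is, checking that the lone residual crossing inside each block can be realized between edges incident to different vertices of a component of $M^*$ — which comes down to the (finite) fact that contracting a block's $M$-pieces leaves a planar graph — together with the routine but delicate bookkeeping when a single edge of $G$ lies in several crossings and thereby becomes a path through several blocks.
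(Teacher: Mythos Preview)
Your proposal is correct and follows essentially the same route as the paper: take an $M$-avoiding drawing via Lemma~\ref{LEM:drawing}, replace every crossing by a copy of $B_0$ as in Lemma~\ref{LEM:G*}, and read off (i)--(iv) by iterating that lemma together with Theorem~\ref{Th:drawing} and Fleischner's planar CCD theorem; your write-up in fact fills in several details (cyclic $4$-edge-connectivity of $G^*$, non-separating $\mathcal{T}_{M^*}$, the bookkeeping when one edge lies in several crossings) that the paper leaves implicit. One cosmetic slip: the block substitution in Figure~\ref{FIG:edge-crossing} leaves \emph{two} permitted crossings (each of $x_5x_6$ and $x_6x_7$ crosses $x_3x_4$), not a single one, but this does not affect your argument since all residual crossings satisfy the hypothesis of Theorem~\ref{Th:drawing}.
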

\begin{proof}
Suppose $M$ is a  perfect pseudo-matching of $G$.
 By Lemma~\ref{LEM:drawing}, there exists  a drawing with  $M$-avoiding intersections of $G$. 
Let $G^*$ be the graph, obtained from $G$ by repeatedly applying the same conversion  we did in the proof of Lemma~\ref{LEM:G*}, for all  edge-crossings in
$ E(G)\setminus E(M)$ in this drawing with  $M$-avoiding intersections of $G$.
Therefore, by repeatedly using  Lemma~\ref{LEM:G*}, we can check that all statements in Theorem~\ref{THM:G*} are true:
in particular, $G^*$ is cyclically $4-$edge-connected since $G$ is. Thus, (i) in Lemma~\ref{LEM:G*} translates into (i) in Theorem~\ref{THM:G*}. Furthermore, since $M^*$ is planarizing, $(G^*/M^*,\mathcal{T}_{M^*})$ has a compatible cycle decomposition which can be 
readily translated into a CDC of $G^*$ containing the cycles of $G^*\setminus E(M^*)$.
Finally, it is straightforward to see that $G^*$ may have a CDC which cannot be transformed into a CDC of $G$.
\end{proof}

One is tempted to improve Theorem~\ref{THM:G*} by using a perfect matching in $G$ and using the 
Blanu\v{s}a block  $B_2$ instead of $B_0$ for the crossings in a corresponding drawing of $G$. The larger graph $G^{*}$
would, in fact, contain  a planarizing perfect matching (see the perfect matching of $B_2$ in $B_2^2$ in Figure~\ref{FIG:BlanusaSnarks}).
However, due to the $3-$edge-coloring of $B_2$  we cannot draw the same conclusions as for $G^{*}$ in Theorem~\ref{THM:G*}.

Finally we observe that the construction of a cyclically $4-$edge-connected cubic graph with planarizing perfect pseudo-matching as expressed by Theorem~\ref{THM:G*}, tells us that there are at least as many snarks with planarizing perfect pseudo-matching as there are cyclically $5-$edge-connected snarks. `As many' is to be understood in terms of infinite  cardinalities of sets. This is expressed in our final theorem.   

\begin{theorem}\label{THM:map}
 Given the family $\mathcal{F}_5$ of all cyclically $5-$edge-connected snarks together with a drawing in the plane in accordance with Lemma~$\ref{LEM:drawing}$, for each element $G \in \mathcal{F}_5$. Call this drawing also $G$ and construct $G^*$ from $G$ in accordance with Theorem~$\ref{THM:G*}$. Define a mapping 

                                                                     $$ f:  \mathcal{F}_5  \longrightarrow  \mathcal{F}_4$$      
{\rm (}the latter denoting the family of cyclically $4-$edge-connected snarks having a planarizing 
perfect pseudo-matching{\rm )} by setting 

                                                                       $$f(G) = G^*.$$ 

It follows that $f$ is injective. 
\end{theorem}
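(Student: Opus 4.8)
The plan is to show that the map $f$ is injective by reconstructing $G$ from $G^* = f(G)$. The key observation is that $G^*$ is built from $G$ by replacing each edge-crossing in the fixed drawing of $G$ by a copy of the Blanu\v{s}a block $B_0$ via the local surgery described in Lemma~\ref{LEM:G*}; since the drawing attached to each $G \in \mathcal{F}_5$ is fixed once and for all, the number and placement of these insertions is determined by $G$ (as a drawing), and the construction is deterministic. So the heart of the matter is to argue that from the abstract graph $G^*$ one can locate the inserted $B_0$-blocks, delete them, and re-join the four attachment vertices in the unique way that recovers the crossing edges $xx'''$ and $x'x''$.

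First I would set up the reconstruction procedure: in $G^*$, search for subgraphs isomorphic to $B_0$ that are attached to the rest of the graph by exactly four edges at the half-edge vertices $a, a', b, b'$, and such that removing the block and reconnecting $x$–$x'''$ and $x'$–$x''$ (the pairing being forced by the structure of $B_0$, namely $a$ with $a'$ and $b$ with $b'$) yields a cubic graph. I would then reverse every such insertion simultaneously to obtain a cubic graph $G'$, and argue $G' \cong G$. The reason the pairing is forced is exactly the two-colouring analysis in part (i) of Lemma~\ref{LEM:G*}: the block $B_0$ has a distinguished structure in which $\{a,a'\}$ and $\{b,b'\}$ form natural opposite pairs, so among the three ways of matching up the four pendant vertices only one is compatible with $B_0$'s internal symmetry, and that is the one used in the construction. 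One should note that in $G^*$, after the surgery, the vertices $x, x', x'', x'''$ have regained degree $3$ using one edge into the block each, so the block sits as a well-defined "gadget" and the four external edges are intrinsic.

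The step I expect to be the main obstacle is ruling out \emph{spurious} copies of $B_0$ inside $G^*$ — that is, showing that the only $4$-attached $B_0$-subgraphs whose removal-and-rejoin produces a cubic graph are precisely the ones we inserted, so that the reconstruction is canonical and does not depend on a choice. Here I would use cyclic edge-connectivity: $G \in \mathcal{F}_5$ is cyclically $5$-edge-connected, hence has girth $\ge 5$ and no small cyclic edge-cuts, so $G$ itself contains no subgraph of the form "copy of $B_0$ attached by four edges" (such a configuration would produce a cyclic $4$-edge-cut or a short cycle, or a Petersen-like piece incompatible with $G$'s high connectivity — in particular $B_0$ on its own is a planar gadget whose presence would violate $5$-edge-connectivity of the original snark). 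Consequently, every $B_0$-gadget in $G^*$ must be one of the inserted ones: an inserted gadget meets the rest of $G^*$ along the four edges $xa, x'''a', x'b, x''b'$, and since the drawing-based construction never creates a new crossing-gadget as a byproduct, these are exhaustive. Once the gadgets are canonically identified and the pairing is forced, the reconstruction $G^* \mapsto G'$ is a well-defined function that inverts $f$ on its image, whence $f$ is injective.

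Finally I would remark that $f$ indeed lands in $\mathcal{F}_4$: by Theorem~\ref{THM:G*}(i), $G^*$ is a snark since $G$ is, by Theorem~\ref{THM:G*}(iii) it has a planarizing perfect pseudo-matching, and $G^*$ is cyclically $4$-edge-connected (as recorded in the proof of Theorem~\ref{THM:G*}), so $G^* \in \mathcal{F}_4$ and the codomain is correctly chosen. Combining this with the reconstruction argument gives the injectivity of $f$, completing the proof; and since $\mathcal{F}_5$ is infinite, so is $\mathcal{F}_4$, which is the "as many" statement in the desired cardinality sense.
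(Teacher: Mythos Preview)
Your approach is essentially the same as the paper's: both recover $G$ from $G^*$ by recognising the inserted $B_0$ blocks and undoing the surgery, using the cyclic $5$-edge-connectivity of $G$ to guarantee that the blocks are canonically located. The paper frames the recognition step slightly differently --- it observes that the \emph{only} cyclic $4$-edge-cuts in $G^*$ are the four attaching edges of an inserted $B_0$, so any isomorphism $h^*:G^*\to H^*$ must carry these cuts (and hence the blocks) to one another, and then descends to an isomorphism $h:G\to H$. Your version searches directly for $4$-attached copies of $B_0$; this is equivalent, but the one place where you are looser than the paper is the sentence ``consequently, every $B_0$-gadget in $G^*$ must be one of the inserted ones'': arguing that $G$ itself contains no such gadget does not by itself exclude a spurious $B_0$ in $G^*$ straddling pieces of $G$ and one or more inserted blocks. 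The clean fix (implicit in the paper) is to note that a $4$-attached $B_0$ is in particular a cyclic $4$-edge-cut of $G^*$, and that the cyclic $5$-edge-connectivity of $G$ forces every cyclic $4$-edge-cut of $G^*$ to be one of the attachment cuts. On the pairing of the four half-edges your appeal to the colouring analysis in Lemma~\ref{LEM:G*}(i) is valid and is actually more explicit than the paper's ``otherwise we could rotate the notation''.
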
                                           

\begin{proof}  Let $G$ and $H$ be cyclically $5-$edge-connected snarks together with respective drawings in the plane in accordance with Lemma~\ref{LEM:drawing}, and let $G^*$ and $H^*$ be constructed from $G$ and $H$, respectively, in accordance with Theorem~\ref{THM:G*}. Suppose $G^*$ and $H^*$ are isomorphic; let $h^*(G^*) = H^*$ be such an isomorphism. It follows that the only cyclic $4-$edge-cuts in $G^*, H^*$, respectively, are of the form % 
                                          $$F=\{x_0x,x_5x^{''},x_2x^{'},x_7x^{'''}\}$$
(see Figure~\ref{FIG:edge-crossing}), separating a Blanusa block  $B_0$  from the rest of $G^*, H^*$, respectively. We    denote these edge-cuts in $G^*, H^*$, respectively, by
                    $$F_{G^*} = \{x_{0_{G^*}}x_{ _{G^*}}, x_{5_{G^*}}x^{''}_{ _{G^*}}, x_{2_{G^*}}x^{'}_{ _{G^*}}, x_{7_{G^*}}x^{'''}_{ _{G^*}}\}$$ 
and 
 $$F_{H^*} = \{x_{0_{H^*}}x_{ _{H^*}}, x_{5_{H^*}}x^{''}_{ _{H^*}}, x_{2_{H^*}}x^{'}_{ _{H^*}}, x_{7_{H^*}}x^{'''}_{ _{H^*}}\}.$$

Now, assume without loss of generality that $h^*(F_{G^*}) = F_{H^*}$. It follows that                                   
     $$h^*(\{x_{0_{G^*}}x_{ _{G^*}}, x_{7_{G^*}}x_{ _{G^*}}^{'''}\})  =  \{x_{0_{H^*}}x_{ _{H^*}}, x_{7_{H^*}}x^{'''}_{ _{H^*}}\}$$ 
and  
$$h^*(\{x_{5_{G^*}}x^{''}_{ _{G^*}}, x_{2_{G^*}}x^{'}_{ _{G^*}}\})  =  \{x_{5_{H^*}}x^{''}_{ _{H^*}}, x_{2_{H^*}}x^{'}_{ _{H^*}}\}$$
(otherwise we could rotate the notation). This leads to an isomorphism $h$ between $G$ and $H$
by setting                  $h(e) = h^*(e)$  for every edge $e$ which is not an element of a pair of crossing edges;

and set
                $h(x_Gx^{'''}_G) = x_Hx^{'''}_H$ and $h(x^{'} _{G}x^{''}_G )= x^{'}_Hx^{''}_H $
for every pair of crossing edges in the drawing of $G$ and $H$, respectively. In other words, non-isomorphic cyclically $5-$edge-connected snarks correspond to non-isomorphic cyclically $4-$edge-connected snarks with planarizing perfect pseudo-matching. That is, $f$ is injective.
\end{proof}

\section{Final remarks}

We have demonstrated the usefulness of the concept of perfect pseudo-matchings $M$ in categorizing snarks $G$, be it in connection with compatible cycle decompositions in $G/M$, be it – more generally – in connection with cycle double covers. In particular, we showed that when aiming at solving the CDC Conjecture we may restrict ourselves to snarks with stable dominating cycle (Question~\ref{Q:1} and Proposition~\ref{Prop:1}); or alternatively, we only need to deal with snarks having no SUD-$K_5-$minor-free perfect pseudo-matching. 

As for several well-known classes of snarks we have shown that they have planarizing perfect pseudo-matchings, and that planarizing perfect pseudo-matchings appear in many snarks (Theorems~\ref{THM:G*} and~\ref{THM:map}). As a matter of fact, an old conjecture of the first author of this paper claims implicitly that if a snark $G$  has a planarizing perfect pseudo-matching which corresponds to a dominating cycle $C$ in $G$, then $G$ has a $5-$cycle double  cover containing $C$ (see~\cite[Conjecture 10]{Fleischner1988}). This would lend support to Hoffmann-Ostenhof's Strong $5-$Cycle Double Cover Conjecture (see~\cite{Arthur}).

\medskip \noindent

\end{document}